\newtheorem{assumption}{Assumption}
\newtheorem{lemma}{Lemma}
\newtheorem{definition*}{Definition}
\newtheorem{theorem}{Theorem}
\newtheorem{remark}{Remark}
\newtheorem{acknowledgements}{Acknowledgements}
\renewcommand{\vec}[1]{\mathbf{#1}}
\newcommand\norm[1]{\left\lVert#1\right\rVert}
\newcommand\Lpnorm[1]{\left\lVert#1\right\rVert _ {L^p(\Omega,\mathbb{R}^d)} }
\newcommand{\expect}[1]{\mathbb{E}{\left[#1\right]}}
\newcommand\Lnorm[2]{\left\lVert#1\right\rVert _ {L^{#2}(\Omega,\mathbb{R}^d)} }
\newcommand\LnormR[2]{\left\lVert#1\right\rVert _ {L^{#2}(\Omega,\mathbb{R})} }
\newcommand\eval[1]{\mathbb{E}\left[ #1  \right]    }
\newcommand{\der}[2]{\frac{\partial #1} {\partial #2} }
\newcommand{\innerproduct}[2]{{ \big \langle} #1 , #2 { \big \rangle}}
 \newcommand{\Yht}{\bar{Y}_{t_n} }
 \newcommand{\Yhk}{\bar{Y}_{t_k} }
 \newcommand{\Xt}{X_t}
 \newcommand{\Yt}{\bar{Y}_t }
 \newcommand{\Ys}{\bar{Y}_s }
 \newcommand{\Yr}{\bar{Y}_r }
\newcommand{\Lh}{\mathcal{\hat{L}}}
\newcommand{\Lhi}{\mathcal{\hat{L}}^i}
\newcommand{\LL}{\mathcal{L}}
\newcommand{\mut}{\mu^{tm}}
\newcommand{\sigmat}{\sigma_i ^{tm}}
\newcommand{\mutp}[1]{\mu^{tm} ( #1 )  }
\newcommand{\sigmatp}[1]{\sigma_i ^{tm}   ( #1 )  }
\newcommand{\Ft} {F^{tm} _{\Dt}}
\newcommand{\FG}{\mathcal{N}}
\newcommand{\NZ}[1] {\mathbb{N}_{#1}}
\newcommand{\NP}[1] {\mathbb{N}_{#1}^+}
\newcommand{\SG} [1]{\mathbf{\Phi} _{#1} }
\newcommand{\Dt}{\Delta t}
\newcommand{\DW}[1]{\Delta W^i_#1}
\newcommand{\vDW}[1]{\Delta \vec{W}_#1}
\newcommand{\sB}{\sum_{i=1}^mB_i}
\newcommand{\sg}{\sum_{i=1}^mg_i}
\newcommand{\sumk}{\sum_{k=0}^{n-1} }
\newcommand{\sumi}{\sum_{i=1}^{m} }
\newcommand{\intk}{\int_{t_k}^{t_{k+1}} }
\newcommand{\inttn}{\int_{t_n}^t }
\newcommand{\Ito}{It\^o\xspace}
\newcommand{\Lref}{{N_{R}}\xspace}
\renewcommand{\d}{{\mathrm{d}}}
\newcommand{\dt}{{\d}{t}}
\newcommand{\dW}{{{\d}W}}
\newcommand{\ds}{{\d}{s}}
\newcommand{\Y}{{\mathbb{Y}}}
\begin{document}

\title[Weak Convergence Of Tamed Exponential Integrators]{Weak Convergence of Tamed Exponential Integrators for Stochastic Differential Equations}

\author{Utku Erdo\u{g}an}
\address{Department of Mathematics, Eski\c{s}ehir Technical University, YunusEmre Kamp\"{u}s\"{u}, Eski\c{s}ehir, 26470, T\"{u}rkiye}
\email{utkuerdogan@eskisehir.edu.tr}

\author{Gabriel J. Lord}
\address{Department of Mathematics, IMAPP, Radboud University, Nijmegen, 6500 GL, Netherlands}
\email{gabriel.lord@ru.nl}

\maketitle

\begin{abstract}
 We prove weak convergence of  order one for a class of exponential based integrators for SDEs with non-globally Lipschitz drift. Our analysis covers tamed versions of Geometric Brownian Motion (GBM) based methods as well as the  standard exponential schemes. The numerical performance of both the GBM and exponential tamed methods through four different multi-level Monte Carlo  techniques are compared. We observe that for linear noise the standard exponential tamed method requires severe restrictions on the step size unlike the GBM tamed method.
\end{abstract}

\section{Introduction}
\label{intro}
We consider weak convergence analysis of tamed schemes for semi-linear stochastic differential equations (SDEs) of the form
\begin{equation} \label{eq:EqAFB}
  \d\Xt= \left( A \Xt+F(\Xt) \right) \dt + \sum _{i=1}
  ^m  \left(  B_i \Xt+g_i(\Xt)\right)  \dW_t^i,
  \quad X_0=x \in  \mathbb{R} ^d, 
\end{equation}
with $m$, $d\in\mathbb{N}$. 
Here $W_t^i$ are iid Brownian motions  on the probability space  $(\Omega , \mathcal{F},\mathbb{P})$   with filtration $({\mathcal{F} _ {t}  }) _{t \in 
  [0,T]}$. The nonlinearity $F : \mathbb{R} ^d \rightarrow \mathbb{R} ^d$ only satisfies a one-sided Lipschitz condition, where as the $g_i : \mathbb{R} ^d \rightarrow \mathbb{R} ^d$ are globally Lipschitz.
  We assume throughout that the matrices $A$, $B_i \in
\mathbb{R} ^{d \times d }$  satisfy the following zero commutator conditions 
\begin{equation} \label{eq:commute}
AB_i-B_iA=0, \quad B_jB_i-B_iB_j=0
\qquad \text{for} \quad  i,j=1, \hdots, m.
\end{equation}
Condition \eqref{eq:commute} is used to exploit the exact solution of Geometric Brownian Motion (GBM) below.
By introducing the 
notation $W_t:=[W^1_t,\ldots,W^m_t]^T$,
\begin{equation}
\mu (y):= A y+F(y), \quad  \sigma_i(y) :=B_i y+ g_i(y),
\label{eq:musigmadefn}
\end{equation}
and the matrix $\sigma(x):=[\sigma_i(x)]$, 
(i.e. columns formed by  $\sigma_i$) we can re-write 
\eqref{eq:EqAFB} as 
\begin{equation}\label{eq:mu_sigma_SDE}
\d\Xt= \mu(\Xt) \dt + \sigma(\Xt) \dW_t,  \quad X_0=x \in  \mathbb{R} ^d.
\end{equation}
The schemes we examine are in the class of exponential integrators. These methods have proved to be effective schemes for many SDEs and stochastic partial differential equations (SPDEs).
Originally only linear (or linearized) drift terms were exploited, see for example \cite{lord2004,biscay1996,Mora,jimenez1999simulation,JimenezCarbonell}, however recently several  exponential integrators that also exploit the linear terms in diffusion emerged \cite{expmil,utkuLord,debrabant2021rk,yang2021class}.
We are particularly interested in dealing with a one-sided Lipschitz drift term $F$ with superlinear growth. Hutzenthaler \textit{et al.} \cite{hutzenthaler2011} showed both strong and weak divergence of Euler's method for the general SDE  \eqref{eq:EqAFB} with superlinearly growing coefficients $F$ and/or $g_i$. Following \cite{hutzenthaler2011} many explicit variants of Euler--Maruyama schemes that guarantee the strong convergence to the exact solution of SDE were derived, see for example \cite{hutzenthaler2012,TruncatedMao2015,Beyn2016,TruncatedMao2018,Izgi2018,TretyakovZhang}. The  most well known approach to deal with superlinearly growing coefficients is to employ ``taming" to prevent the unbounded growth of numerical solutions. Although there has been much consideration of strong convergence of tamed schemes, see for example \cite{hutzenthaler2012,sabanis2013note,sabanis2016}  there has been little consideration of weak convergence for SDEs \cite{brehier2020weakergodic,bossy2021weak,wang2021weak,wang2021weakIMA}.
In \cite{brehier2020weakergodic}, Br{\'e}hier investigated the weak error of the explicit tamed Euler scheme for SDE's with one--sided Lipschitz continuous drift and additive noise  to approximate averages with respect to the invariant distribution of the continuous time process.  Bossy \textit{et al.}, \cite{bossy2021weak}, proposed  and proved weak convergence of a  semi-explicit exponential Euler scheme for a one-dimensional SDE with non-globally Lipschitz drift and diffusion behaving as $x^{\alpha}$, with $\alpha>1$. Due to the weak condition on the diffusion coefficient, their study covers regularity results for the solution of the Kolmogorov PDE commonly used in weak error  analysis. In  \cite{wang2021weakIMA}, Wang \textit{et al.} formulated a general weak convergence theorem for one-step numerical approximations of SDEs with non-globally  drift coefficients. They applied this to prove weak convergence of rate one for the tamed and backward Euler--Maruyama methods. We would like to point out that their analysis is not directly applicable to our GBM based schemes as it is not a classical one-step method but rather the composition of the GBM flow and a one-step flow.
In the context of SPDEs,
  Cai \textit{et al.}  \cite{cai2021weak} constructed and analysed a  weak convergence of a numerical scheme based on a spectral Galerkin method in space and a tamed version of the exponential Euler method. Below we impose conditions that are similar to those for the SPDE in \cite{cai2021weak}.
We prove weak convergence for a class of exponential integrators where a form of taming is used for the one-sided Lipschitz drift term. The GBM methods exploit the exact solution of geometric Brownian motion, see \cite{utkuLord} and \cite{tamedGBM} where strong convergence of related methods were considered.
Further, by taking $A=B_i=0$ (or incorporating these terms into the nonlinearities)
we simultaneously prove weak convergence for the standard exponential tamed scheme such as in 
\cite{CHEN2020135}.
Our proof is based on the Kolmogorov equation and one of the main difficulties is to take into account the stochasticity in the solution operator.

In our numerical experiments we compare different approaches to estimate the weak errors all using multi-level Monte Carlo (MLMC) techniques as reviewed in \cite{giles_2015}.
For a linear diffusion term we observe that the exponential tamed method does not perform well for larger time step sizes and hence a time step size restriction is required for MLMC techniques (for example to estimate the weak errors). This is of particular interest as tamed methods were originally introduced and strong convergence was examined precisely to control nonlinearities in the context of MLMC type simulations, see  \cite{hutzenthaler2012}. The GBM based method does not suffer in this way in our experiments for linear noise. For nonlinear diffusion both tamed based methods require a step size restriction for convergence on the MLMC techniques.

The paper is organized as follows: in Section \ref{sec:setting} we state our assumptions on the drift and diffusion, present the new numerical method and state our main results. In Section \ref{sec:NumericalResults} we present numerical simulations illustrating the rate of convergence using the MLMC simulations and compare the different approaches. The proofs of the main results are then given in detail in Sections \ref{sec:BoundedMoments} and \ref{sec:WeakConvergence}. 

\section{Setting and Main Results}
\label{sec:setting}
Throughout the paper we let $\innerproduct{\cdot}{\cdot}$ denote the standard inner product in $\mathbb{R}^d$ (so 
$\innerproduct{y}{z}=y ^\intercal z$ for $y,z \in \mathbb{R}^d$  )
and $\norm{ \cdot }$ represent both the Euclidean norm for vectors as well as the induced matrix norm. A vector $\beta=(\beta_1,\beta_2,\hdots,\beta_d)$ is a multiindex of order $\vert\beta\vert=\sum_{i=1}^d \beta_i$ with nonnegative integers components. The partial derivative operator corresponding to the multiindex $\beta$  is defined as
$$
D^\beta h(x)=\frac{\partial ^{\vert \beta \vert} h(x)}{\partial _{x_1}^{\beta_1} \partial _{x_2}^{\beta_2}\hdots \partial _{x_d}^{\beta_d} } 
$$
where $h\in C^{\vert \beta\vert}(\mathbb{R}^d;\mathbb{R}^l)$. For a nonnegative integer $j$,  
we let $\vec{D}^jh(x)$  represent the $j$th order {derivative operator} applied to a function $h\in C^j(\mathbb{R}^d;\mathbb{R}^l)$. When $j=1$ we simply write the Jacobian as $\vec{D}h$. 

 Additionally, $C^{k} _{b} (\mathbb{R}^d;\mathbb{R})$  denotes  of the set of k-times differentiable functions, which are uniformly continuous and bounded  together with  their derivatives up to k-th order.

We define the sets 
$$\NZ{n}:=\{0,1,2,\ldots,n\} \qquad \text{and} \qquad \NP{n}:=\{1,2,\ldots,n\}.$$ 
Before introducing our class of numerical methods we present three  results from \cite{cerrai} on the existence and uniquenes, bounded moments and mean-square differentiability of the exact solution to \eqref{eq:mu_sigma_SDE}. 
%

\subsection{Preliminary Results  for the  SDE}
For an SDE such as \eqref{eq:mu_sigma_SDE} 
with  globally Lipschitz drift and diffusion coefficients many classical textbooks on stochastic analysis consider the Kolmogorov PDE. However, for non-globally Lipschitz drift coefficients there are far fewer results. One key work is Cerrai \cite{cerrai} for the properties of the exact solution to a SDE with one-sided drift coefficient.

\begin{assumption}\label{ass:1}  Let $H\geq 0$ be given. Let the functions $F$ and $g_i \in C^h(\mathbb{R}^d;\mathbb{R}^d)$ for  some $h\geq H$ where $i=1,2, \hdots, m$. Define the matrix $g$ by the columns of $g_i$ so that $g=[g_i]$.
Additionally, assume that 

\begin{enumerate}[(i)]
\item   there exists $r \geq 0$ such that for any  $j=0,1,\hdots,h$

$\sup_{y \in \mathbb{R}^d} \norm{D^\alpha F(y)} (1+\norm{y}^{2r+1-j})^{-1} < \infty, \qquad  \vert \alpha \vert=j$;
\item there exists $\rho \leq r$ such that for any $j=0,1,\hdots,h$

$\sup_{y \in \mathbb{R}^d} \norm{D^\alpha g_i(y)} (1+\norm{y}^{\rho-j})^{-1} < \infty,\qquad  \vert \alpha \vert=j $;
\item for all $p>0$ there exist  $K=K(p) \in \mathbb{R}$ such that 

$ \innerproduct{y}{\vec{D} F(z) y}+ p \norm{\vec{D}g (z)y }^2 \leq K \norm{y}^2, \quad \forall \ y,z \in \mathbb{R}^d.$
\end{enumerate} 
\end{assumption}

\begin{assumption}\label{ass:2} 
There exist constants $a>0$ and $r,\gamma,c \geq 0$ such that for any $y,z \in \mathbb{R}^d$
$$\innerproduct{Az}{z} + \innerproduct{ F(y+z)-F(y)}{z} \leq -a \norm{z}^{2r+2} + c (\norm{y} ^{\gamma} +1).$$
\end{assumption}
In particular, under Assumption \ref{ass:1}, Cerrai \cite{cerrai}  proves the existence and uniqueness of a solution to the SDE \eqref{eq:EqAFB}.  
\begin{theorem}[\cite{cerrai},Theorem 1.3.5]
Suppose that Assumption \ref{ass:1} holds with $H=3$. Then there exists a unique solution $X_t$ for $t \in [0,T]$ to the SDE \eqref{eq:EqAFB} along with the following moment bound for $p \geq 1$ and constant $C=C(p,T)>0$  
\begin{equation} \label{eq:exact_boundedness}
   \eval{  \norm{X_t}^p} <C ( 1+   \norm{x}^p  ).
\end{equation}
\end{theorem}

To get order one weak convergence we need to assume bounded moments of  derivatives of the exact solution to the SDE \eqref{eq:EqAFB} with respect to the initial condition.  By the notation  $X_t ^x$, we emphasize  that  the initial condition is  $X_0=x$. We denote the  derivative of  the exact  solution with respect to the initial condition by $\vec{D}_{x}X_t ^x$. The following regularity result is  given in \cite{cerrai} (see also \cite{wang2021weakIMA,wang2021weak}).

\begin{theorem} [\cite{cerrai}, Theorem 1.3.6] \label{teo:bounded_derivatives}
Let Assumption \ref{ass:1} hold with $H=3$, Assumption \ref{ass:2} hold and $X_t^x$ be the solution to \eqref{eq:EqAFB}. Then $X_t^x$ is $h$ times mean-square differentiable and 
for $i=1,\hdots, h$, $ p\geq 1$ and $t \in [0,T]$
\begin{equation*}
\sup_{x \in \mathbb{R} ^d}  \eval{\norm{\vec{D}_x ^i  X_t ^x }^p }< \infty.
\end{equation*}
\end{theorem}
\begin{assumption}\label{ass:phiC2b}
Let the test function $\phi: \mathbb{R}^d \to \mathbb{R}$ and $\phi\in C^{2} _{b}(\mathbb{R}^d)$.
\end{assumption}
Before continuing we define the quantity
\begin{equation}  \label{eq:psidef}
    \Psi(t,x) :=\eval{\phi(X_t) \vert X_0=x }=\eval{\phi(X_t^x)}.
\end{equation}
\begin{theorem}[\cite{cerrai}, Theorem 1.6.2] \label{teo:KolmogPDE}
Let Assumption \ref{ass:1} with $H=3$, Assumption \ref{ass:2} and Assumption \ref{ass:phiC2b} hold. Let $X_t$ be the solution to \eqref{eq:EqAFB}.
Then, $\Psi(t,x)$ defined in \eqref{eq:psidef} is the unique classical solution to the 
Kolmogorov PDE 
\begin{equation} \label{eq:KolmogorovPDE}
\der{}{t} \Psi(t,x)=\LL \Psi(t,x)
\end{equation}
where, with $\mu$ and $\sigma_i$ defined in \eqref{eq:musigmadefn}, $\LL$ is given by
$$
 \LL \Psi(t, x)  :=\vec{D} \Psi(t,x) \mu(x)  +\frac{1}{2} \sum_{i=1}^ m \sigma_i(x)^\intercal   \vec{D}^2  \Psi(t,x)  \sigma_i(x).
$$
\end{theorem}
\begin{remark}
    Rather than imposing an extra hypotheses (Hypotheses 1.3) as in \cite{cerrai} on the diffusion coefficient we assume in Assumption \ref{ass:phiC2b} that $\phi$ is in  $C^2 _b (\mathbb{R}^d)$.
    Together with the mean-square differentiability of $X_t^x$ given in Theorem \ref{teo:bounded_derivatives}, $\Psi(t,x)$ then satisfies smoothness and boundedness conditions required in the proof of Theorem 1.6.2 in \cite{cerrai}. 
\end{remark}     

\subsection{Tamed GBM Method and Convergence Results}
Our class of exponential methods takes advantage of the linear terms in \eqref{eq:EqAFB} by exploiting the stochastic operator
\begin{equation} \label{eq:semigroup}
\SG{t,t_0}=\exp \left(( A-\frac{1}{2} \sum_{i=1} ^m B_i ^2)(t-t_0) +
  \sB (W_t^i  -W_{t_0}^i) \right)
\end{equation}
which, under the commutativity condition \eqref{eq:commute}, is the  solution   to 
\begin{equation} \label{eq:Homogen}
\d \SG{t, t_0} = A  \SG{t,t_0}   \dt + \sum _{i=1} ^m  \ B_i \SG{t,t_0} \dW_t^i, \qquad \SG{t_0, t_0} =I_d.
\end{equation}
Given $N\in \mathbb{N}$ and final time $T$ we set the time step size $\Delta t=\frac{T}{N}$. This gives the uniform time partition $0=t_0<t_1    <t_2<\hdots <t_N=T$ with $t_n=n\Dt$. We denote increments $\DW{n}:=W^i_{t_{n+1}}  -W^i_{t_n}$.

We propose and prove weak convergence of the tamed GBM  method
\begin{equation}  \label{eq:tamed_EI0}
Y_{n+1}^N=\SG{t_{n+1},t_n}\left(Y_n^N+ \! \left( \Ft(Y_n^N)-\sB g_i(Y_n^N)\right) \Dt  + \! \sum_{i=1}^m g_i(Y_n^N)  \DW {n}\right)
\end{equation}
where $\Ft$ is the taming term given by
\begin{equation} \label{eq:Ft}
    \Ft(y) := \alpha(\Dt,y)F(y).
\end{equation}
The taming function $\alpha(\Dt,y)$ is assumed to satisfy for all $y\in \mathbb{R}^d$ and $\Dt>0$
\begin{equation}
\label{eq:tamingrequirement}
\norm{\alpha(\Dt,y)F(y)}\Dt \leq 1, \quad 0\leq \alpha(\Dt,y) \leq 1,\quad
\vert \alpha(\Dt,y)-1 \vert \leq C\Dt,
\end{equation}
where $C>0$ is a constant independent of $\Dt$.
The typical form of taming (e.g. \cite{hutzenthaler2012,sabanis2013note}) is to take $\alpha(\Dt,y)=(1+\Dt \|F(y)\|^p)^{-1}$, so that with $p=1$
\begin{equation}\label{eq:tmterm}
\Ft(y)=\frac{F(y)}{1+\Dt \norm{F(y)}}.
\end{equation}
Strong convergence of \eqref{eq:tamed_EI0} with \eqref{eq:tmterm} and $g_i\equiv 0$ was considered in \cite{tamedGBM} and the efficiency of the method was illustrated numerically.
If we take $\alpha(t,y) \equiv 1$, so that $\Ft=F$ in \eqref{eq:tamed_EI0}, then we obtain one of the methods in \cite{utkuLord} (proved to be strongly convergent with order 1/2 under global Lipschitz assumptions). Further it was shown the method is highly efficient for SDE's  with dominant linear terms and, by a homotopy approach, is competitive when applied to highly non-linear forms of \eqref{eq:EqAFB}. Note that it is clear from \eqref{eq:EqAFB} that taking $B_i=0$ and $A=B_i=0$
(or incorporating these terms into the nonlinearities)
we recover from \eqref{eq:tamed_EI0} the exponential tamed and the standard tamed methods respectively.

Before we state our main results we give an integral representation of the continuous version of the numerical method. 
Let us define the continuous extension   of  numerical  solution $\Yt$ for $t\in[t_n ,t_{n+1}]$ by  
\begin{multline}\label{eq:ctsIntegral}
\Yt =  \SG{t,t_n} \Yht+  \SG{t,t_n}\int_{t_n} ^ {t}    \left( \Ft(\Yht) -\sB g_i(\Yht)\right)  \ds \\ + \SG{t,t_n}\int_{t_n} ^ {t} \sum _{i=1}
  ^m  g_i(\Yht)\dW^i_s.
\end{multline}
Then it is  clear that  $Y_n^N=\Yht$ for $t_n \leq t < t_{n+1}$.  
\begin{lemma}\label{lem:itoEq}
Let $ \bar{Y}_t$ be the interpolated continuous version in \eqref{eq:ctsIntegral} of the    numerical  solution given in  \eqref{eq:tamed_EI0}. Then the  differential for this solution is given by
\begin{equation}
\label{eq:cts}
\d\Yt = \mu^{tm} \dt + \sum_{i=1}^m \sigma_i^{tm} \dW_t^i,
\qquad t\in[t_n,t_{n+1}]
\end{equation}
where $\mu^{tm}=\mu^{tm}(t,t_n)$ and $\sigma_i^{tm}=\sigma_i^{tm}(t,t_n)$ and
\begin{equation}\label{eq:mutm_sigmatm_def}
\mu^{tm} := A \Yt + \SG{t,t_n} \Ft (\Yht),    \quad 
\text{and} \quad \sigma_i^{tm} := B_i  \Yt +\SG{t,t_n } g_i(\Yht ).
\end{equation}
\end{lemma}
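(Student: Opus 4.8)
The plan is to recognize that over the subinterval $[t_n,t_{n+1}]$ every coefficient appearing in \eqref{eq:ctsIntegral} is frozen at the $\mathcal{F}_{t_n}$-measurable values $\Yht$, $g_i(\Yht)$ and $\Ft(\Yht)$, so that the factorization $\Yt=\SG{t,t_n}Z_t$ holds with
\begin{equation*}
Z_t:=\Yht+\int_{t_n}^{t}\Big(\Ft(\Yht)-\sumi B_i g_i(\Yht)\Big)\,ds+\int_{t_n}^{t}\sumi g_i(\Yht)\,dW_s^i .
\end{equation*}
Since the integrands do not depend on $t$, the process $Z_t$ is a trivial \Ito process with differential $dZ_t=\big(\Ft(\Yht)-\sumi B_i g_i(\Yht)\big)\,dt+\sumi g_i(\Yht)\,dW_t^i$. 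The other factor $\SG{t,t_n}$ is, by construction in \eqref{eq:semigroup}, the solution of the linear matrix SDE $d\SG{t,t_n}=A\,\SG{t,t_n}\,dt+\sumi B_i\,\SG{t,t_n}\,dW_t^i$ with $\SG{t_n,t_n}=I_d$, which I would take as given.

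First I would apply the \Ito product rule to the (matrix)$\times$(vector) product $\Yt=\SG{t,t_n}Z_t$, producing three contributions: $(d\SG{t,t_n})Z_t$, $\SG{t,t_n}\,dZ_t$, and the quadratic-covariation term $d\langle \SG{\cdot,t_n},Z\rangle_t$. Using $\SG{t,t_n}Z_t=\Yt$, the first contributes $A\Yt\,dt+\sumi B_i\Yt\,dW_t^i$; the second contributes $\SG{t,t_n}\Ft(\Yht)\,dt-\SG{t,t_n}\sumi B_i g_i(\Yht)\,dt+\sumi \SG{t,t_n}g_i(\Yht)\,dW_t^i$.

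The step I expect to be the crux is the covariation term, where care with the matrix/vector ordering is needed: summing the entrywise covariations $\langle (\SG{\cdot,t_n})_{ab},(Z)_b\rangle$ and matching the $dW^i$-part of $d\SG{t,t_n}$ (namely $B_i\SG{t,t_n}$) with that of $dZ_t$ (namely $g_i(\Yht)$), and invoking independence of the $W^i$, yields $d\langle\SG{\cdot,t_n},Z\rangle_t=\sumi B_i\SG{t,t_n}g_i(\Yht)\,dt$. Crucially, the commutator conditions \eqref{eq:commute} force $\SG{t,t_n}$ to commute with every $B_i$, since it is the exponential of matrices built from $A$ and the $B_j$, all of which commute with each $B_i$; hence $\SG{t,t_n}B_i=B_i\SG{t,t_n}$.

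Consequently this covariation term exactly cancels the $-\SG{t,t_n}\sumi B_i g_i(\Yht)\,dt$ drift coming from $\SG{t,t_n}\,dZ_t$ — which is precisely the purpose of the $-\sumi B_i g_i$ correction built into the scheme \eqref{eq:tamed_EI0}. Collecting the surviving terms leaves $d\Yt=\big(A\Yt+\SG{t,t_n}\Ft(\Yht)\big)\,dt+\sumi\big(B_i\Yt+\SG{t,t_n}g_i(\Yht)\big)\,dW_t^i$, which is the asserted identity with the stated $\mu^{tm}$ and $\sigma_i^{tm}$.
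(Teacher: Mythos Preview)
Your argument is correct. It differs from the paper's proof in direction: the paper assumes $d\Yt=\mut\,dt+\sumi\sigmat\,dW_t^i$ with unknown $\mut,\sigmat$, writes down the SDE for the \emph{inverse} operator $\SG{t,t_n}^{-1}$, applies the \Ito product rule to $\SG{t,t_n}^{-1}\Yt$, and then identifies the unknowns by matching against the differential of \eqref{eq:ctsIntegral} rewritten as $d(\SG{t,t_n}^{-1}\Yt)$. Your forward computation on $\SG{t,t_n}Z_t$ avoids both the inverse-GBM SDE and the ansatz/matching step, and it makes explicit where the commutator hypothesis \eqref{eq:commute} enters (to swap $B_i$ past $\SG{t,t_n}$ in the covariation term so that it cancels the built-in correction $-\sumi B_i g_i$). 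The paper's route uses the same commutativity implicitly when it factors $\SG{t,t_n}^{-1}$ through $A$, $B_i$ in \eqref{eq:tmp3}; your version simply surfaces that dependence more transparently.
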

\begin{proof}
By the definition of the inverse GBM (see for example \cite{kloeden2011}), $$
\d \SG{t,t_n} ^{-1}=\left(-A+\sB^2\right) \SG{t,t_n} ^{-1} \dt - \sB \SG{t,t_n} ^{-1} \dW_t^i.
$$ 
We seek the appropriate $\mut$ and $\sigmat$.
The  product rule for the \Ito differential  gives  
\begin{multline}
\label{eq:tmp3}
\d\left(  \SG{t,t_n} ^{-1} \bar{Y}_t \right)=  \SG{t,t_n} ^{-1}\Big( (-A+\sum _{i=1}
  ^m B_i^2) \bar{Y}_t+\mut-\sum _{i=1}
  ^m B_i\sigmat \Big)  \dt \\ + \SG{t,t_n} ^{-1}\Big(\sum _{i=1}
  ^m\left(\sigmat -B_i \bar{Y}_t \right)
   \Big) \dW_t^i. 
\end{multline}
On the other hand \eqref{eq:ctsIntegral} can be written as
\begin{equation}\label{eq:tmp4}
    \d\left(  \SG{t,t_n} ^{-1} \bar{Y}_t \right) =     \left( \Ft(\Yht) -\sB g_i(\Yht)\right)  \dt+  \sum _{i=1}
  ^m  g_i(\Yht)\dW^i_t.
\end{equation}
By comparison of \eqref{eq:tmp4} with \eqref{eq:tmp3}, we find 
\begin{align}
   \SG{t,t_n} ^{-1}\left((-A+\sum _{i=1}
  ^m B_i^2) \Yt+\mut-\sum _{i=1}
  ^m B_i\sigmat \right) & =  \Ft( \Yht) -\sB g_i( \Yht) \label{eq:first_eq}\\
\SG{t,t_n} ^{-1} \left(\sigmat -B_i \bar{Y}_t \right)& =   g_i (\Yht), \quad i=1,2 \hdots, m. \label{eq:second_eq}
\end{align}
Solving matrix equation \eqref{eq:second_eq} for $ \sigma_i^{tm}$, the commutativity conditions \eqref{eq:commute} and substitution into equation \eqref{eq:first_eq} to determine $\mut$ gives the desired result.
\end{proof}
Our main result is weak convergence of order one of the numerical scheme \eqref{eq:tamed_EI0} and to prove this we make use of bounded moments.
\begin{assumption}\label{ass:global_g} There exists $K>0$ such that for all $i=1,\hdots, m$
 $$\norm{ g_i(y)-g_i(z) }\leq K \norm{y-z},  \quad \forall \ y, z \in  \mathbb{R}^d.$$
\end{assumption}
\begin{remark} 
The global Lipschitz property given in Assumption \ref{ass:global_g} implies boundedness of $\vec{D}g$. Together with Assumption \ref{ass:1}, by the mean value Theorem, there exists $K \in \mathbb{R}$ such that for all $y,z \in \mathbb{R}^d$
\begin{equation}\label{eq:one_sided_ineq}
 \innerproduct{y-z}{F(y)-F(z)}\leq K \norm{y-z}^2.
 \end{equation}
\end{remark}
\begin{theorem}\label{teo:boundedMoment}
Let Assumption \ref{ass:1} with $H=1$ and Assumption  \ref{ass:global_g} hold. Then, for  $Y_n^N$ be given by \eqref{eq:tamed_EI0}  and for  all  $p \in [1,\infty)$
$$
\sup_{N \in \mathbb{N}}  \sup_{0\leq n\leq N}   \eval{ \norm{Y_n^N}^p }<\infty.
$$
\end{theorem}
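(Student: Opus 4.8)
The plan is to establish the bound for even integer moments $p=2q$, $q\in\mathbb{N}$, and then recover every $p\in[1,\infty)$ from Jensen's inequality, $\eval{\norm{Y_n^N}^p}\le\eval{\norm{Y_n^N}^{2q}}^{p/2q}$ whenever $2q\ge p$. Writing $V(y)=(1+\norm{y}^2)^q$, the target is the one--step estimate
\[
\eval{V(Y_{n+1}^N)\mid\mathcal{F}_{t_n}}\le(1+C\Dt)\,V(Y_n^N)+C\Dt,
\]
with $C$ independent of $n$ and $N$; the discrete Gronwall lemma then gives the claim uniformly in $N$.

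First I would record moment bounds on the stochastic operator. Since $A$ and the $B_i$ commute, $\SG{t,t_n}$ is the exponential of a single matrix affine in the Gaussian increments, so it is of log--normal type and satisfies $\eval{\norm{\SG{t,t_n}}^\ell}\le 1+C_\ell\Dt$ for every $\ell\ge1$ and all $t\in[t_n,t_{n+1}]$; the point is that the per--step factor is $1+O(\Dt)$, which keeps the $N$--fold product bounded. I would also use the linear growth of $\sigma_i$ from Assumption \ref{ass:1}$(v)$ together with boundedness of $B_i$, and the strong dissipativity that follows from Assumption \ref{ass:1}$(iv)$ with $y=0$, namely $\innerproduct{y}{F(y)}\le-a\norm{y}^{2r+2}+C(1+\norm{y})$.

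The core is to expand $\eval{V(Y_{n+1}^N)\mid\mathcal{F}_{t_n}}$, either by \Ito's formula applied to $V(\Yt)$ through Lemma \ref{lem:itoEq}, or equivalently by Taylor expanding the one--step map \eqref{eq:tamed_EI0} in the increments and taking conditional expectations. For $q=1$ the leading terms reassemble into the tamed energy identity
\[
\eval{\norm{Y_{n+1}^N}^2\mid\mathcal{F}_{t_n}}=\norm{Y_n^N}^2+2\Dt\innerproduct{Y_n^N}{AY_n^N+\Ft(Y_n^N)}+\Dt\sumi\norm{\sigma_i(Y_n^N)}^2+R_n,
\]
where the $-\sB g_i$ correction in the drift recombines with the cross terms of $\SG{t_{n+1},t_n}^\intercal\SG{t_{n+1},t_n}$ to restore the full diffusion $\sigma_i=B_iy+g_i$, the $A$ contribution comes from the $\Dt$--part of that operator, and $R_n$ collects the higher--order remainder. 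The dissipativity makes $2\Dt\innerproduct{Y_n^N}{\Ft(Y_n^N)}$ strongly negative through $-a\norm{Y_n^N}^{2r+2}$, and this must absorb the positive superlinear terms produced elsewhere.

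The hard part is the extraction of this dissipativity through the random operator and the control of $R_n$ --- precisely the stochasticity of the solution operator flagged in the introduction. The danger is the drift term $\innerproduct{\Yt}{\SG{t,t_n}\Ft(\Yht)}$: taming only gives $\norm{\Ft}\Dt\le1$, so a crude estimate makes it $O(1)$ rather than $O(\Dt)$ per step, and the superlinear growth of $F$ cannot be removed without the sign from Assumption \ref{ass:1}$(iv)$. I would therefore write $\Yt=\SG{t,t_n}\eta_t$ with
\[
\eta_t=\Yht+(\Ft(\Yht)-\sB g_i(\Yht))(t-t_n)+\sumi g_i(\Yht)(W_t^i-W_{t_n}^i),
\]
and show that the discrepancy between $\innerproduct{\Yt}{\SG{t,t_n}\Ft(\Yht)}$ and the dissipative quantity $\innerproduct{\Yht}{\Ft(\Yht)}$ contributes only $O(\Dt)$ after expectation, by pairing each factor $\SG{t,t_n}-I$ (mean $O(\Dt)$, fluctuation $O(\sqrt{\Dt})$) against the noise increments through the cross--variation and invoking the log--normal moment bounds. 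The genuinely superlinear pieces of $R_n$ I would handle by the standard tamed device: use $\norm{\Ft}\Dt\le1$ to trade one factor $\Dt$ for boundedness, and then Young's inequality, so that the weighted dissipation $-a\norm{Y_n^N}^{2r+2}$ absorbs all positive superlinear contributions and leaves a bound of the form $C\Dt(1+V(Y_n^N))$. This yields the one--step estimate and hence the theorem.
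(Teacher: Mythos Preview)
Your route is genuinely different from the paper's. You propose a direct one--step Lyapunov estimate $\eval{V(Y_{n+1}^N)\mid\mathcal{F}_{t_n}}\le(1+C\Dt)V(Y_n^N)+C\Dt$ followed by discrete Gronwall. The paper instead follows the Hutzenthaler--Jentzen--Kloeden pathwise technique: it introduces sub-events $\Omega_n^N$ on which the Brownian increments and a dominating process $D_n^N$ are controlled, proves the \emph{pathwise} bound $\mathbbm{1}_{\Omega_n^N}\norm{Y_n^N}\le D_n^N$ (Lemma~\ref{lem:boundedness1}), shows $D_n^N$ has uniform $L^p$ moments (Lemma~\ref{lem:Dboundedness}), and then handles $(\Omega_n^N)^c$ by a crude $O(N)$ Gronwall bound combined with the exponentially small probability of the complement (the bootstrap of \cite{hutzenthaler2012}). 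The paper's decomposition sidesteps exactly the difficulty you flag: the stochastic operator enters only through the factor $\prod_k\norm{\SG{t_{k+1},t_k}}$ in $D_n^N$, whose $L^p$ norm is bounded by independence across steps and Lemma~\ref{lem:SG_bnd}, so no one--step conditional estimate through $\SG{}$ is ever needed.

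Your sketch has a real gap at the point you yourself identify as ``the hard part''. First, the claim $\eval{\norm{\SG{t,t_n}}^\ell}\le 1+C_\ell\Dt$ for the operator norm is not what Lemma~\ref{lem:SG_bnd}(ii) gives (the constant $\kappa_1$ there is not $1$); the $1+O(\Dt)$ bound in Lemma~\ref{lem:SG_bnd}(i) applies only to $\mathcal{F}_{t_n}$-measurable vectors, and your ``stuff'' contains $\DW{n}$, which is correlated with $\SG{t_{n+1},t_n}$. Second, the dissipation you invoke is not $-a\norm{Y_n^N}^{2r+2}$ but $\alpha(\Dt,Y_n^N)\cdot(-a\Dt\norm{Y_n^N}^{2r+2})$, and when taming is active $\alpha\sim(\Dt\norm{F})^{-1}$, so the available negativity degrades to order $-\norm{Y_n^N}$. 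Showing that this weakened dissipation still absorbs every cross term generated by expanding $\norm{\SG{}\,\eta}^{2q}$ --- in particular the correlations between $\norm{\Ys}^{2q-2}$, $(\SG{s,t_n}-I)$, and the $\DW{}$ inside $\eta_s$, each paired against $\Ft$ of size up to $1/\Dt$ --- is the entire content of the theorem, and ``pairing factors through the cross-variation'' is not yet a proof. A Lyapunov argument along your lines can likely be made to work, but it needs these estimates written out; the paper's HJK approach avoids them entirely.
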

The proof of this Theorem is given in Section \ref{sec:BoundedMoments} and follows the approach of \cite{hutzenthaler2012}. 
However we need to control the stochastic operator in \eqref{eq:semigroup} and in contrast to \cite{tamedGBM} we also now need to take account of the nonlinear diffusion terms $g_i$. The main novelty in our proof below is the interaction between these two terms. 

\begin{theorem}
\label{thrm:1}
Let Assumption \ref{ass:1} with $H=4$, Assumption \ref{ass:2} and Assumption \ref{ass:global_g} hold. Let $X_T$ be the solution to \eqref{eq:EqAFB}. Let $Y_N^N$ be found from \eqref{eq:tamed_EI0} and \eqref{eq:tamingrequirement} hold.
Then, for all $\phi:\mathbb{R}^d\to\mathbb{R}$, $ \phi \in  C^{4} _{b} (\mathbb{R}^d)$ there is a constant $C>0$, independent of $\Dt$ such that 
$$
\vert \eval{  \phi(Y_N^N ) }-\eval{\phi(X_T )}   \vert \leq
C \Delta t.
$$
\end{theorem}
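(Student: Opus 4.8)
The plan is to base the argument on the Kolmogorov backward equation of \eqref{eq:EqAFB} together with the continuous extension supplied by Lemma \ref{lem:itoEq}. Set $u(t,x):=\eval{\phi(X_T)\mid X_t=x}$, which solves $\partial_t u+\LL u=0$ with $u(T,\cdot)=\phi$, where the generator is
$$
\LL u(x)=\innerproduct{\mu(x)}{\vec{D}u(x)}+\tfrac12\sumi\innerproduct{\sigma_i(x)}{\vec{D}^2u(x)\,\sigma_i(x)}.
$$
A preliminary ingredient, which I would isolate as a separate lemma (differentiating the flow of \eqref{eq:EqAFB} and using the moment bound \eqref{eq:exact_boundedness}), is that $u(t,\cdot)\in C^4$ with derivatives up to order four of at most polynomial growth, uniformly in $t\in[0,T]$; this is the step that makes the one-sided Lipschitz, superlinearly growing drift tractable. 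Since $u(0,x)=\eval{\phi(X_T)}$ and $u(T,\cdot)=\phi$, the weak error telescopes along the grid,
$$
\eval{\phi(Y_N^N)}-\eval{\phi(X_T)}=\sum_{k=0}^{N-1}\eval{u(t_{k+1},Y_{k+1}^N)-u(t_k,Y_k^N)}.
$$

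Next I would apply the \Ito formula to $t\mapsto u(t,\Yt)$ on each subinterval $[t_k,t_{k+1}]$ using $d\Yt=\mut\,dt+\sumi\sigmat\,dW_t^i$ from Lemma \ref{lem:itoEq}. Denoting by $\Lh_t$ the (time-inhomogeneous, stochastic) numerical generator obtained by replacing $\mu,\sigma_i$ with $\mut,\sigmat$, and using $\partial_t u=-\LL u$, the drift of $u(t,\Yt)$ is exactly $(\Lh_t-\LL)u(t,\Yt)$. The martingale part has zero expectation (a true martingale by the polynomial growth of $\vec Du$ and the moment bounds), so each telescoped term reduces to
$$
\eval{u(t_{k+1},Y_{k+1}^N)-u(t_k,Y_k^N)}=\intk\eval{(\Lh_t-\LL)u(t,\Yt)}\,dt.
$$
It then suffices to prove $\eval{(\Lh_t-\LL)u(t,\Yt)}=O(\Dt)$ uniformly in $t$, as summing $N$ intervals of length $\Dt$ yields the claimed $O(\Dt)$ bound.

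For the local error I would expand the generator difference. Because the linear parts $A\Yt$ and $B_i\Yt$ are common to $\mu,\mut$ and to $\sigma_i,\sigmat$, only the nonlinear pieces survive:
$$
\mut-\mu(\Yt)=\SG{t,t_n}\Ft(\Yht)-F(\Yt),\qquad
\sigmat-\sigma_i(\Yt)=\SG{t,t_n}g_i(\Yht)-g_i(\Yt).
$$
Each difference splits into a taming contribution $\Ft(\Yht)-F(\Yht)=O(\Dt)$ from \eqref{eq:tamingrequirement}, an increment contribution $F(\Yht)-F(\Yt)$ and $g_i(\Yht)-g_i(\Yt)$ treated by Taylor expansion in $\Yt-\Yht$, and an operator contribution $(\SG{t,t_n}-I_d)$; the quadratic-form differences in the diffusion part reduce to terms linear and quadratic in these increments. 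The decisive observation, using the commutator conditions \eqref{eq:commute} and the \Ito representation of $\SG{t,t_n}$, is that although $\SG{t,t_n}-I_d$ carries a martingale term $\sumi B_i(W^i_t-W^i_{t_n})$ of size $\sqrt{\Dt}$, taking expectation gives $\eval{\SG{t,t_n}}=e^{A(t-t_n)}=I_d+O(\Dt)$, so these fluctuations cancel in the mean. Consequently the surviving contributions are either genuinely $O(\Dt)$ or arise from products of two martingale increments whose quadratic variation is again $O(\Dt)$; combined with the polynomial growth of $\vec{D}u,\vec{D}^2u$ and the uniform moments of Theorem \ref{teo:boundedMoment} applied to $\Yt$, this gives the required local bound.

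The main obstacle is exactly the stochasticity of the solution operator $\SG{t,t_n}$. Unlike the deterministic exponential $e^{A\tau}$ in classical exponential integrators, here $\SG{t,t_n}-I_d$ is only $O(\sqrt{\Dt})$ pathwise, and the same holds for the increment $\Yt-\Yht$. The delicate part is therefore a simultaneous second-order expansion of $\SG{t,t_n}$, of $\Yt-\Yht$, and of the derivatives of $u$ evaluated at $\Yt$ rather than $\Yht$, while keeping track of which cross-products of Brownian increments survive after taking expectation. Organising this bookkeeping so that every residual is bounded by $\Dt$ times a polynomially growing factor, integrable by Theorem \ref{teo:boundedMoment}, is the heart of the proof.
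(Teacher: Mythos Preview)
Your proposal is correct and follows essentially the same route as the paper: the Kolmogorov backward equation for $u$, It\^o's formula applied to $u(t,\bar Y_t)$ via Lemma \ref{lem:itoEq}, reduction to the generator difference $(\Lh-\LL)u$, and the same splitting of $\mut-\mu$ and $\sigmat-\sigma_i$ into taming, increment and operator pieces. The paper implements your ``simultaneous second-order expansion'' by packaging the diffusion terms into auxiliary functions $\Theta_i(s,x)$ and applying It\^o's formula a second time to $\Theta_i(r,\bar Y_r)$ on $[t_k,s]$, and it sharpens your polynomial-growth claim for $\vec{D}^j u$ to uniform boundedness via Lemma \ref{lem:bounded_derivatives} together with $\phi\in C^4_b$.
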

Although in Theorem \ref{teo:KolmogPDE} we have $\phi \in  C^2 _b (\mathbb{R}^d)$, to get first order weak convergence, we  impose  $\phi \in  C^4 _b (\mathbb{R}^d)$ in Theorem \ref{thrm:1}.
 We prove Theorem \ref{thrm:1} in Section \ref{sec:WeakConvergence} using the Kolmogorov equation. Once again 
we need to take careful account of the stochastic operator $\SG{t,t_0}$ from \eqref{eq:semigroup} as well as dealing with the one-sided Lipschitz drift $F$.  Before giving the proofs we present some numerical results.

\section{Numerical Results}
\label{sec:NumericalResults}

We seek to estimate numerically the weak discretization error $
\vert \expect{\phi(X_T)}-\expect{\phi(Y_N^N)}\vert$, where $Y_N^N$ is a numerical approximation to $X_T$ with $\Dt= T/N$.  To illustrate the rate of convergence we need to estimate the weak error for different values of $\Dt$.
Our aim is to examine this in the absence of an analytic solution or where the numerical solution of the Kolomogorov equation is prohibitively expensive. 
We also wish to illustrate the weak convergence rate of order 1 that is proved in Theorem \ref{thrm:1}.

In practice we take a reference numerical solution so that $X_T\approx \Y^\Lref$ with $\Lref>>N$ (with $\Dt_R=T/N_R$). 
We then estimate
$$\vert\expect{\phi(\Y^\Lref)}-\expect{\phi(Y_N^N)}\vert=
\vert \expect{\phi(\Y^\Lref)-\phi(Y_N^N)} \vert.
$$
Note that $\Y^\Lref$ may be computed by a different method to that for $Y_N^N$. In \cite{lang2018} issues in computing weak errors using MLMC methods for SPDEs are discussed with multiplicative noise and upper and lower bounds of simulation errors are obtained. However the authors did not consider the simultaneous computation of a reference solution. In \cite{abladinger2017} the MLMC method is examined where the zero solution is asymptotically mean square stable and an importance sampling technique was introduced.  We observe similar stability issues below. 

We briefly discuss four approaches to estimate the weak error using the 
multi-level Monte-Carlo technique (MLMC), see \cite{giles_2015,Lord20141}.
We denote these methods \textbf{Trad}, \textbf{MLMCL0}, \textbf{MLMC}, \textbf{MLMCSR} and examine them numerically in our experiments.
In a traditional method, denoted \textbf{Trad},  we estimate independently $\expect{\phi(\Y^\Lref)}$ and $\expect{\phi(Y_N^N)}$ by a MLMC method.
Thus, for the reference solution we have
\begin{equation}
\label{eq:MLMCref}
    \expect{\phi(\Y^\Lref)}  = \expect{\phi(\Y^{N_0})} + \sum_{\ell=1}^{R} \expect{\phi(\Y^{N_{\ell}}) - \phi(\Y^{N_{\ell-1}})}
\end{equation}
and for the approximation, with $N_L:=N$ 
\begin{equation}
\label{eq:MLMCapprx}
    \expect{\phi(Y_N^N)} = \expect{\phi(Y_{N_0}^{N_0})} + \sum_{\ell=1}^{L} \expect{\phi(Y_{N_\ell}^{N_\ell}) - \phi(Y_{N_{\ell-1}}^{N_{\ell-1}})}.
\end{equation}
An alternative is to exploit difference in the telescoping sums from the MLMC approach for the reference $\Y^\Lref$ \eqref{eq:MLMCref} and numerical approximation $Y_N^N$ \eqref{eq:MLMCapprx}. Subtracting we get
    \begin{align}
      \expect{\phi(\Y^\Lref)} & - \expect{\phi(Y_N^N)}    = \expect{\phi(\Y^{N_0})-\phi(Y^{N_0}_{N_0})} \nonumber \\
        & + \sum_{\ell=1}^{L} \expect{\phi(\Y^{N_\ell}) - \phi(\Y^{N_{\ell-1}}) - [\phi(Y_{N_\ell}^{N_\ell}) - \phi(Y_{N_{\ell-1}}^{N_{\ell-1}})]} \nonumber  \\
    & + \sum_{\ell=L+1}^{R} \expect{\phi(\Y^{N_{\ell}}) - \phi(\Y^{N_{\ell-1}})}. 
    \label{eq:MLMCerror}
    \end{align}
For the coarsest level $\ell=0$ we have a choice of estimating 
$\expect{\phi(\Y^{N_0})}-\expect{\phi(Y_{N_0}^{N_0})}$ or
$\expect{\phi(\Y^{N_0})-\phi(Y_{N_0}^{N_0})}$. If the reference $\Y^{N_0}$ is found with a different method to that of $Y^{N_0}_{N_0}$, we can expect some variance reduction using the latter method over the former (and so fewer samples required to approximate the expectation). Note that if only computing for a fixed single $L$ then we expect further variance reduction for the second term, i.e. in estimating 
$$\expect{\phi(\Y^{N_\ell}) - \phi(\Y^{N_{\ell-1}}) - [\phi(Y_{N_\ell}^{N_\ell}) - \phi(Y_{N_{\ell-1}}^{N_{\ell-1}})]}.$$
However, here we wish to compute for different values of $L$ in order to illustrate the rate of convergence. Thus, rather than recomputing \eqref{eq:MLMCerror} for different $L$ we instead as follows
(to avoid recomputing the MLMC estimates).
    
For \textbf{MLMCL0} we exploit efficiency of variance reduction on the coarsest level and so estimate
$\expect{\phi(\Y^{N_0})-\phi(Y^{N_0}_{N_0})},$ with
estimations of
\begin{equation*}\label{eq:MLMC0tmp}
    \expect{\phi(Y_{N_\ell}^{N_\ell}) - \phi(Y_{N_{\ell-1}}^{N_{\ell-1}})},\quad \ell\in\mathbb{N}^+_L, \quad \text{\&}
\quad 
\expect{\phi(\Y^{N_{\ell}}) - \phi(\Y^{N_{\ell-1}})}, \quad  \ell\in\mathbb{N}^+_{R}.
\end{equation*}
Where as for \textbf{MLMC} we estimate the weak error by \eqref{eq:MLMCerror} and estimate
$\expect{\phi(\Y^{N_0})}$, $\expect{\phi(Y^{N_0}_{N_0})}$ separately.
Finally we see from \eqref{eq:MLMCerror} that if the same numerical scheme is used to estimate both the reference solutions and $Y_N^N$ that
 \begin{align*}
      \expect{\phi(\Y^\Lref)}- \expect{\phi(Y_N^N)}   & =  \sum_{\ell=L+1}^{R} \expect{\phi(\Y^{N_{\ell}}) - \phi(\Y^{N_{\ell-1}})}. 
    \label{eq:MLMCerrorTail}
    \end{align*}
    This method does not allow for a more accurate (e.g. higher order) reference solution (as it uses a self reference).
    We call this {\bf MLMCSR} and estimate 
$$\expect{\phi(\Y^{N_{\ell}}) - \phi(\Y^{N_{\ell-1}})}, \qquad \ell=1,2,\ldots,R.$$

To illustrate the rate one weak  convergence results of Theorem \ref{thrm:1} we consider the following cubic SDE in $\mathbb{R}^d$
\begin{equation*}\label{eq:ACexample}
    \d X = [AX + X-X^3] \dt + \beta_1 X \dW + \beta_2 \frac{X}{1+X^2} \dW.
\end{equation*}
This cubic equation is often used as a test equation as solutions exhibit transitions between different phases. With $d=1$ it is sometimes known as the Ginzburg-Landau equation, \cite{kloeden2011}. For larger $d$ the system of SDEs can be thought of as arising from the spatial discretization of a stochastic Allen-Cahn equation, see for example \cite{lord2004,utkuLord,hutzenthaler2011,brehier2020weak,cai2021weak}. 
We first examine the linear diffusion with $\beta_1=0.1$, $\beta_2=0$ and then non-linear diffusion $\beta_1=\beta_2=0.1$. We look at dimensions $d=1,4,10,50$ and $d=100$. 
For $d=1$ we take $A=-4$, $X_0=x=0.5$. For $d\geq 4$
$A$ is the standard tridiagonal matrix from the finite difference approximation to the Laplacian:
$
A=0.5 d^{-2}\text{diag}(1,-2,1)
$
and as initial data we take 
$X_0=x=0.5\exp(-10(y-0.5)^2)$ where 
$y=[1/d,2/d,\ldots,(d-1)/d]^T$.
We solve to $T=1$ and as $\phi$, we take $\phi(y)=\|y\|^2$.
The standard exponential tamed method is used as the reference solution $\Y^\Lref$ (except for \textbf{MLMCSR} when looking at our GBM tamed method) and we take \eqref{eq:tmterm} as our taming function.
We perform 10 separate runs computing the full weak error convergence plot and use this to estimate the standard deviation.
%

\begin{table}[]
    \centering
    \begin{tabular}{|c|c|c|c|c|}\hline
    Method & $d=1$ & $d=4$ & $d=10$\\ \hline
         \textbf{MLMCL0} &  18.51 (9.25) & 
         180.87 (9.89) & 303.31 (0.47) \\ \hline
         \textbf{MLMC}   & 15.51. (0.08) & 
         177.92 (0.41) & 304.07 (0.39) \\ \hline
         \textbf{Trad} GBM   & 15.30 (13.82) &  
         141.57 (0.29) & 247.92 (0.05) \\ \hline
         \textbf{Trad} Tamed  & 13.82 (0.02)  & 
         81.10 (0.23)
         & 151.65 (0.21) \\ \hline
    \end{tabular}
    \caption{Average computational times in seconds (and standard deviation) required to compute the convergence plots (e.g. Fig. \ref{fig:d1} (a) or (b) or one curve in (c) or (d)). Note that \textbf{MLMCSR} is computed directly from \textbf{MLMC}.}
    \label{tab:timings}
\end{table}
%


First we examine the linear noise case with $\beta_1=0.1$ (and $\beta_2=0$). 
Table \ref{tab:timings} compares the efficiency of the different methods for estimating the weak error for dimensions $d=1,4$ and $10$. We restrict $\Dt<0.0075$ in these computations to ensure we see convergence of the exponential tamed method.
We observe, as expected, that estimating the weak error directly by either \textbf{MLMCL0} or \textbf{MLMC} is far more efficient than using the MLMC in a traditional from \textbf{Trad}. We also observe there is a slight advantage to using \textbf{MLMCL0} compared to \textbf{MLMC} (due to some variance reduction on the coarsest level). \textbf{Trad} (GBM)  looks at convergence for the GBM based methods and (Tamed) to the exponential tamed method, we observe a small overhead in the GBM method compared to Tamed.
However, as we discuss below, there are advantages to the GBM approach.  An alternative method would be to take a drift implicit method and to solve a nonlinear system of equations at each step. 
The weak errors are similar to those for GBM.
Using the standard \texttt{fsolve} (without providing the linearization) in MATLAB leads to 
computational times two orders of magnitude larger (e.g. 2396 seconds (8.6) for MLMC0 and 2396 seconds (2.8) for MLMC) performed on the same cluster. For this reason we only show convergence plots for the explicit methods.

In Fig. \ref{fig:d1} we show weak convergence for $d=1$ and in all approaches observe the expected rate of convergence one. Furthermore in (c) and (d), where we have a direct comparison to the exponential tamed method, we see the GBM based method has a smaller error constant. We see in (d) that the two smallest step sizes ($\Dt=2^{-9}$ and $\Dt=2^{-9}$) are too close to the reference time step to observe convergence.
\begin{figure}[h]
\centering
\begin{subfigure}[t]{0.45\textwidth}
\includegraphics[width=\textwidth]{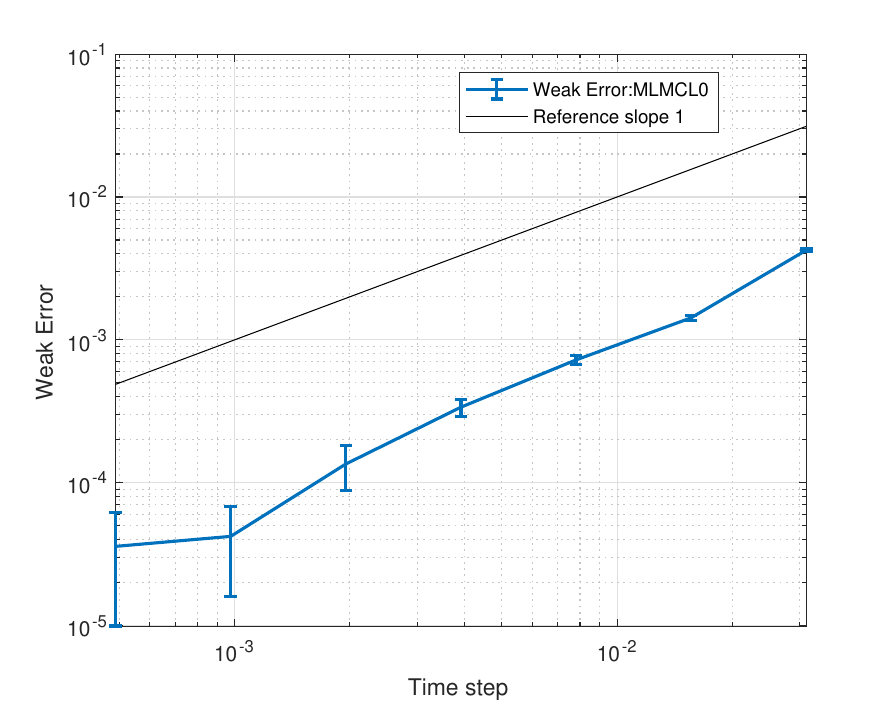}
\caption{\textbf{MLMCL0}}
\end{subfigure}
\begin{subfigure}[t]{0.45\textwidth}
\includegraphics[width=\textwidth]{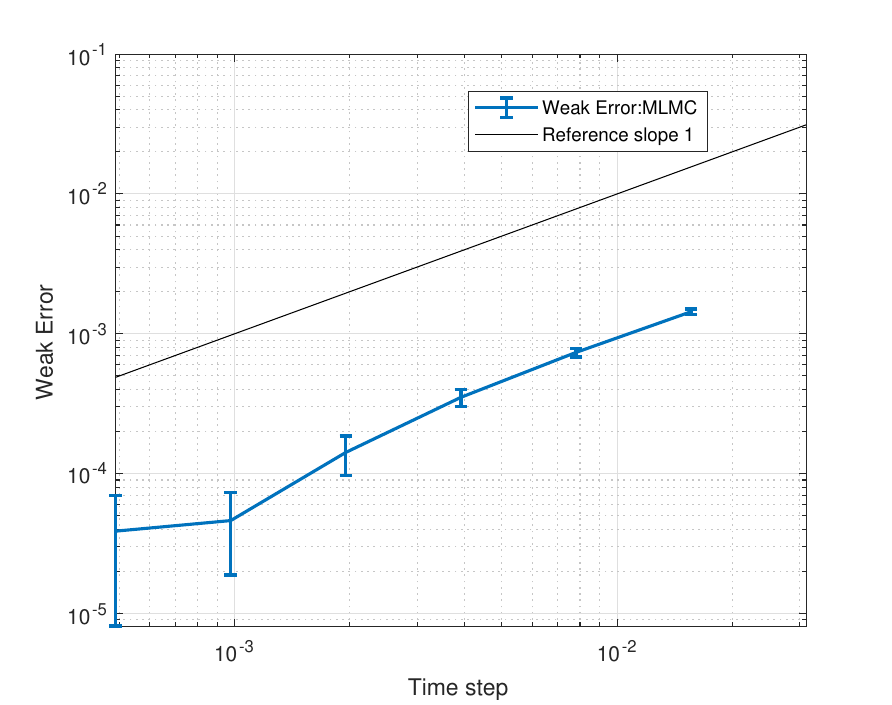}
\caption{\textbf{MLMC}}
\end{subfigure}
\begin{subfigure}[t]{0.45\textwidth}
\includegraphics[width=\textwidth]{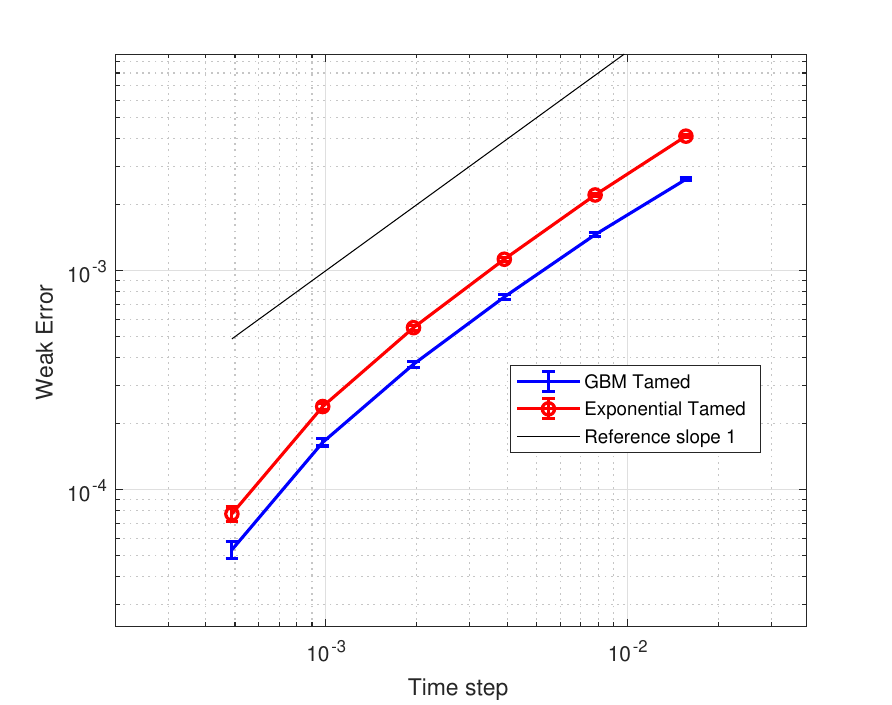}
\caption{\textbf{MLMCSR}}
\end{subfigure}
\begin{subfigure}[t]{0.45\textwidth}
  \includegraphics[width=\textwidth]{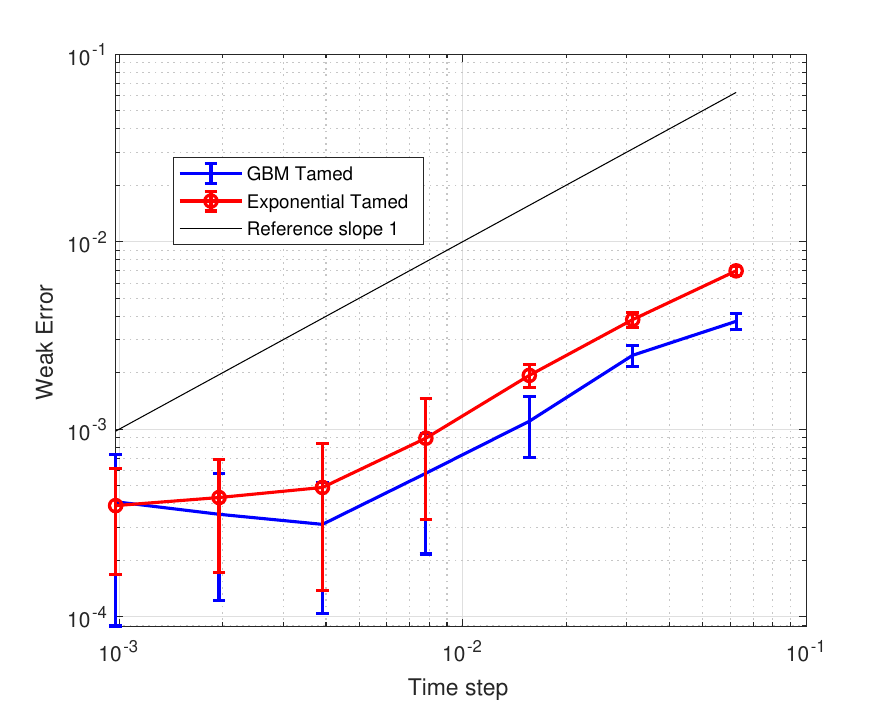}
\caption{\textbf{Trad}}
\end{subfigure}
\caption{Comparison of different approaches to estimate the weak error for $d=1$ with $\beta_1=0.1$ and $\beta_2=0$.}\label{fig:d1}
\end{figure}

In Fig. \ref{fig:d4} we show convergence for $d=4$ and observe the predicted convergence of rate 1. 
In (a), (b) and (c) we restrict the largest step so that $\Dt<0.02$. For the exponential tamed method it was essential to impose this restriction on the maximum step size as, although solutions remain bounded, the error is too large to observe convergence. These large solutions also lead to large variances and hence an infeasible large number of samples, (see also the discussion in \cite{abladinger2017}). 
This is illustrated by comparing (c) where we restrict $\Dt<0.02$ and (d) where $\Dt<0.2$. The exponential tamed method only starts to converge for $\Dt\leq 0.02$.
For the larger step sizes in (d) we see the GBM based method performs well (unlike the exponential tamed method). The small step size restriction on the standard exponential method as $d$ increases makes it difficult to obtain a reference solution using this method using the \textbf{MLMC} method.

\begin{figure}[h]
\centering
\begin{subfigure}[t]{0.45\textwidth}
\includegraphics[width=\textwidth]{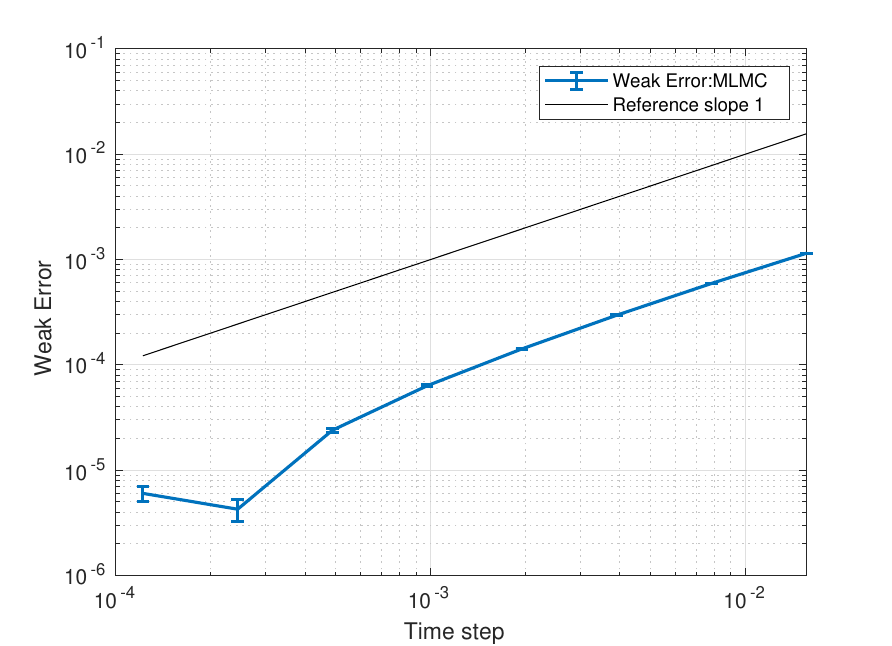}
\caption{\textbf{MLMCL0}}
\end{subfigure}
\begin{subfigure}[t]{0.45\textwidth}
\includegraphics[width=\textwidth]{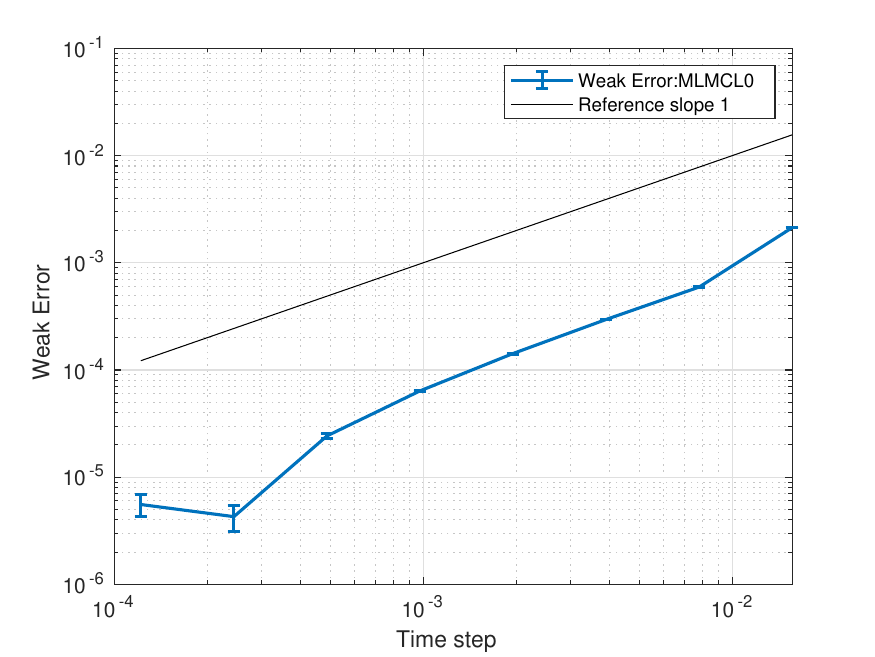}
\caption{\textbf{MLMC}}
\end{subfigure}
\begin{subfigure}[t]{0.45\textwidth}
\includegraphics[width=\textwidth]{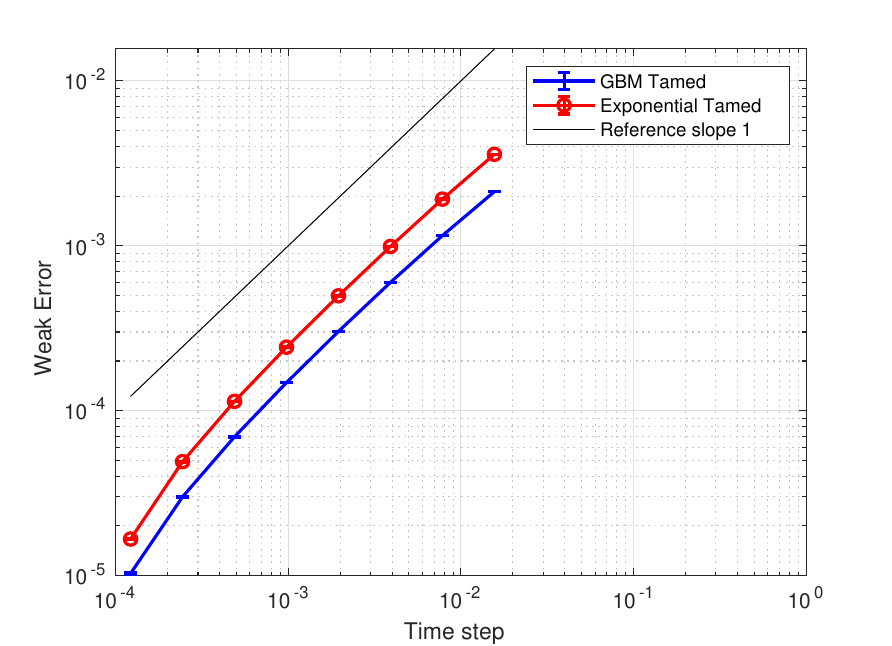}
\caption{\textbf{MLMCSR} ($\Dt<0.02$)}
\end{subfigure}
\begin{subfigure}[t]{0.45\textwidth}
\includegraphics[width=\textwidth]{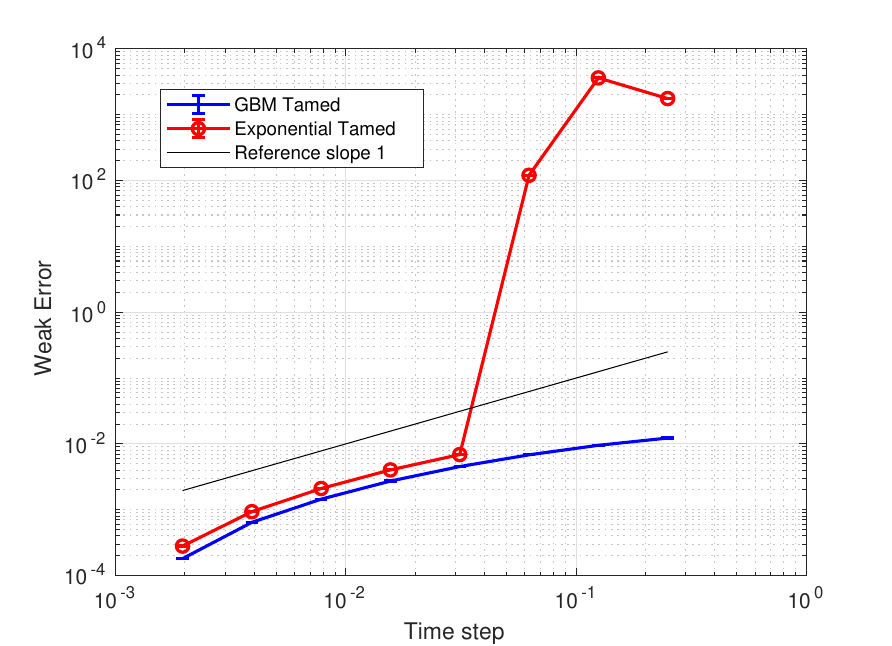}
\caption{\textbf{MLMCSR} ($\Dt<0.2$)}
\end{subfigure}
\caption{Comparison of 3 different approaches to estimate the weak error for $d=4$ with $\beta_1=0.1$ and $\beta_2=0$. In (d) we see that the exponential tamed method is not well behaved for large time step size.}\label{fig:d4}
\end{figure}

In Fig. \ref{fig:d50d100} we illustrate convergence using \textbf{MLMCSR} for the two methods with $d=50$ and $d=100$. We only examine \textbf{MLMCSR} as for these larger values of $d$ due to unreliable results the exponential tamed method for larger $\Dt$ values (as discussed for Fig. \ref{fig:d4}). We see that for $d=50$ we require $\Dt<2\times 10^{-4}$ and for $d=100$ that $\Dt<10^{-4}$ to obtain an approximation using the standard exponential tamed method.
However, we observe that the GBM method converges with the predicted order and there is no issue with large variances.

\begin{figure}[h]
\centering
\begin{subfigure}[t]{0.45\textwidth}
\includegraphics[width=\textwidth]{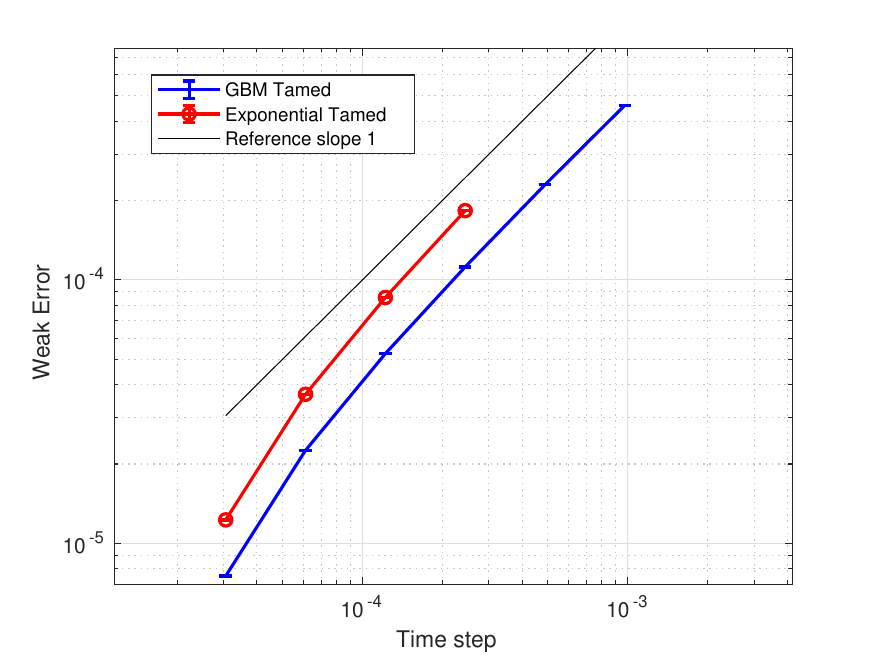}
\caption{\textbf{MLMCSR} $d=50$}
\end{subfigure}
\begin{subfigure}[t]{0.45\textwidth}
\includegraphics[width=\textwidth]{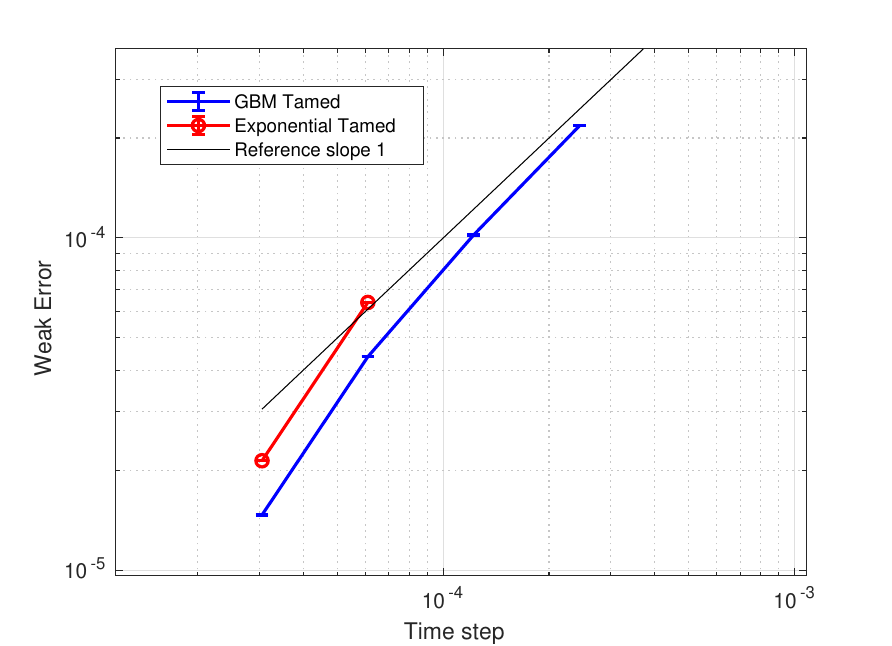}
\caption{\textbf{MLMCSR} $d=100$}
\end{subfigure}
\caption{Convergence for $d=50$ and $d=100$ with $\beta_1=0.1$ and $\beta_2=0$. For larger time step sizes the exponential tamed method has a large variance and we do not see convergence until smaller time step sizes.}\label{fig:d50d100}
\end{figure}

For the nonlinear diffusion, with $\beta_1=\beta_2=0.1$ in \eqref{eq:ACexample}, we illustrate convergence in Figure \ref{fig:d4nonlin} (a) \textbf{MLMCSR} and (b) \textbf{Trad}. 
We have taken $d=4$ and observe weak convergence of rate one. Both the methods \textbf{MLMC} and \textbf{MLMCL0} also show rate one convergence with the GBM based method having a smaller error constant.
For this nonlinear noise, for larger values of $\Dt$, although the taming ensures solutions remain bounded the variance does not reduce. This is now for both methods. 

\begin{figure}[h]
\centering
\begin{subfigure}[t]{0.45\textwidth}
\includegraphics[width=\textwidth]{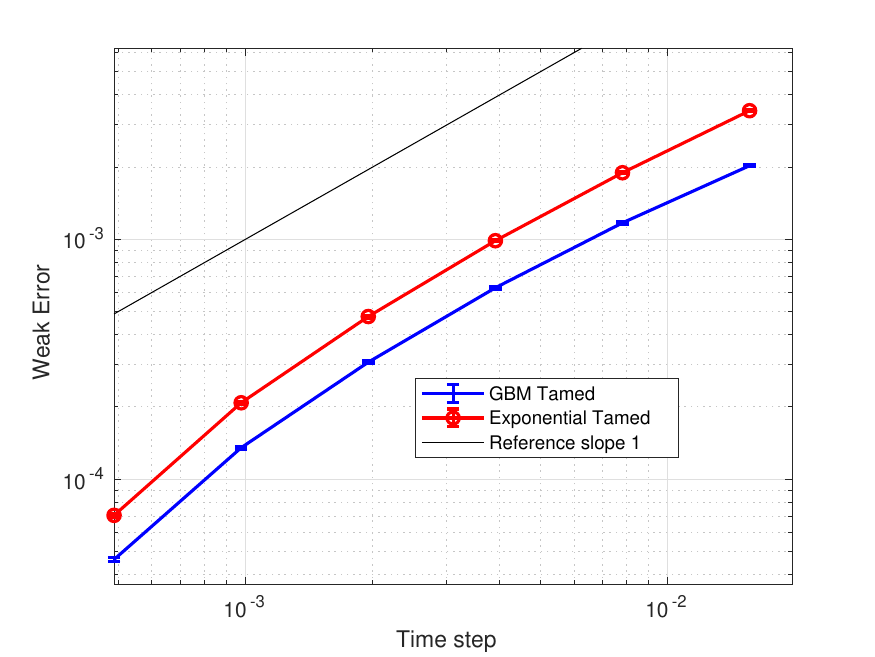}
\caption{\textbf{MLMCSR}}
\end{subfigure}
\begin{subfigure}[t]{0.45\textwidth}
\includegraphics[width=\textwidth]{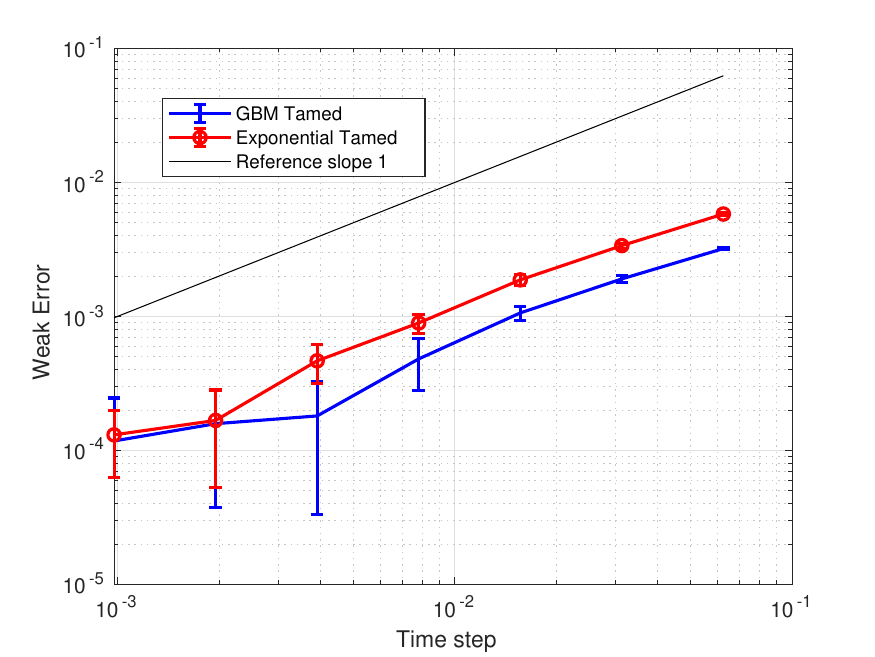}
\caption{\textbf{Trad}}
\end{subfigure}
\caption{Weak convergence for $d=4$ with $\beta_1=\beta_2=0.1$. 
We observe rate one convergence. A time step size restriction is required.}\label{fig:d4nonlin}
\end{figure}

\section{Proof of Theorem \ref{teo:boundedMoment} : bounded moments}
\label{sec:BoundedMoments}

We let $
\Delta \vec{W}_{k}:=(\Delta W_k^1,\Delta W_k^2,\ldots,\Delta W_k^m)^T.$
For the drift we define the notation
\begin{equation}  
\label{eq:ftilde}
\FG (x) := \Ft(x) -\sB g_i(x) 
\end{equation} 
where $\Ft$ is defined in \eqref{eq:Ft}.
We can use $\FG$ to re-write \eqref{eq:tamed_EI0} as
\begin{equation*}  \label{eq:tamed_EI0_short}
Y_{n+1}^N=\SG{t_{n+1},t_n}\left( Y_n^N+  \FG(Y_n^N) \Dt  + \sum_{i=1}^m g_i(Y_n^N)  \DW {n} \right).
\end{equation*}
We  now prove the boundedness of the operator $\SG{t,t_0}$ in  stochastic $L^p$ spaces.

\begin{lemma} \label{lem:SG_bnd}
Suppose $p \geq 2$, $0\leq s<t\leq T $ and let $\SG {t,s}$ be given in \eqref{eq:semigroup} with $t_0=s$. 
Then,
\begin{itemize}
    \item[i)]
For any $\mathcal{F}_{s}$ measurable random variable $v$ in  $L^p (\Omega,\mathbb{R}^d )$
  $$ \Lpnorm{\SG {t,s} v}  \leq    \exp{ \left(\left(\norm{A}+\frac{p-1}{2}\sum _{i=1}
  ^m  \norm{ B_i}^2\right)   (t-s)\right)  }  \Lpnorm{ v}.$$ 

\item[ii)]
There are constants $\kappa_{1,2}>0$, independent of $(t-s)$ such that 
  $$ \eval{\norm{\SG {t,s}}^p } \leq \kappa_1\exp(\kappa_2 (t-s)).$$  
  \end{itemize}

\end{lemma}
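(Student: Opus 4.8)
The plan is to prove (i) by applying Itô's formula to the scalar process $\norm{Z_t}^p$, where $Z_t:=\SG{t,s}v$ solves the linear SDE $dZ_t=AZ_t\,dt+\sumi B_iZ_t\,dW_t^i$ with $Z_s=v$, and then to deduce (ii) from (i) by comparing the operator norm with the Frobenius norm and decomposing over columns. Since $v$ is $\mathcal{F}_s$-measurable while $\SG{t,s}$ is driven only by the increments $W_\cdot^i-W_s^i$ (independent of $\mathcal{F}_s$), the process $Z_t$ is a well-posed solution of this SDE on $[s,t]$, so the computation is legitimate.

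For (i), writing $\phi(z)=\norm{z}^p$ one has $\vec{D}\phi(z)=p\norm{z}^{p-2}z$ and $\vec{D}^2\phi(z)=p\norm{z}^{p-2}I+p(p-2)\norm{z}^{p-4}zz^\intercal$. Applying Itô's formula and taking expectations (discarding the martingale part, see below) yields the drift identity
\begin{multline*}
\frac{d}{dt}\eval{\norm{Z_t}^p} = \eval{p\norm{Z_t}^{p-2}\innerproduct{Z_t}{AZ_t}} \\ + \frac{1}{2}\eval{\sumi\Big(p(p-2)\norm{Z_t}^{p-4}\innerproduct{Z_t}{B_iZ_t}^2 + p\norm{Z_t}^{p-2}\norm{B_iZ_t}^2\Big)}.
\end{multline*}
Bounding $\innerproduct{Z_t}{AZ_t}\leq\norm{A}\norm{Z_t}^2$, using Cauchy--Schwarz in the form $\innerproduct{Z_t}{B_iZ_t}^2\leq\norm{B_i}^2\norm{Z_t}^4$ (applicable in this direction precisely because $p(p-2)\geq0$ for $p\geq2$), and $\norm{B_iZ_t}^2\leq\norm{B_i}^2\norm{Z_t}^2$, the two curvature contributions collapse via $p(p-2)+p=p(p-1)$ to give
$$\frac{d}{dt}\eval{\norm{Z_t}^p}\leq p\Big(\norm{A}+\tfrac{p-1}{2}\sumi\norm{B_i}^2\Big)\eval{\norm{Z_t}^p}.$$
Gronwall's inequality followed by taking $p$-th roots then produces exactly the bound in (i). To justify discarding the stochastic integral I would localize with $\tau_n=\inf\{t:\norm{Z_t}\geq n\}$, establish the inequality for $Z_{t\wedge\tau_n}$ (where all integrands are bounded), and let $n\to\infty$ by monotone convergence, so no a priori moment finiteness is needed.

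For (ii), I would use that in finite dimensions $\norm{\SG{t,s}}\leq\Fnorm{\SG{t,s}}$ and that $\Fnorm{\SG{t,s}}^2=\sum_{j=1}^d\norm{\SG{t,s}e_j}^2$, where each column $\SG{t,s}e_j$ solves the same linear SDE started from the deterministic unit vector $e_j$. The convexity bound $\big(\sum_{j=1}^d a_j\big)^{p/2}\leq d^{p/2-1}\sum_{j=1}^d a_j^{p/2}$ (for $p\geq2$) gives $\eval{\norm{\SG{t,s}}^p}\leq d^{p/2-1}\sum_{j=1}^d\eval{\norm{\SG{t,s}e_j}^p}$, and applying (i) with $v=e_j$ (so $\Lpnorm{e_j}=1$) yields $\eval{\norm{\SG{t,s}}^p}\leq d^{p/2}\exp\!\big(p(\norm{A}+\tfrac{p-1}{2}\sumi\norm{B_i}^2)(t-s)\big)$, i.e. the claim with $\kappa_1=d^{p/2}$ and $\kappa_2=p(\norm{A}+\tfrac{p-1}{2}\sumi\norm{B_i}^2)$, both independent of $t-s$.

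The main obstacle is the Itô computation in (i): correctly assembling the second-order term and recognising that the two curvature pieces combine into the factor $(p-1)/2$, which hinges on $p\geq2$ so that the sign of $p(p-2)$ permits Cauchy--Schwarz in the right direction. An alternative route to (ii) bounds the operator norm pathwise by $\exp(\norm{A-\frac12\sumi B_i^2}(t-s)+\sumi\norm{B_i}|W_t^i-W_s^i|)$ and then controls the Gaussian moment generating functions of the increments, giving $\kappa_1=2^m$; I prefer the column-decomposition argument since it reuses (i) directly.
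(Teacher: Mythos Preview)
Your proof is correct. The Itô-formula computation for (i) is assembled properly, the sign condition $p(p-2)\geq0$ is exactly what makes the Cauchy--Schwarz step go the right way, and the localisation remark is the standard way to justify dropping the martingale term. The column-decomposition argument for (ii) is also sound: the Frobenius-norm domination and the Jensen step $(\sum_j a_j)^{p/2}\leq d^{p/2-1}\sum_j a_j^{p/2}$ reduce cleanly to (i) with deterministic initial data.

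The paper handles the two parts rather differently from you. For (i) it simply cites an earlier reference rather than giving any argument, so your direct Itô--Gronwall proof is more self-contained. For (ii) the paper takes precisely the alternative route you mention at the end: it bounds the matrix exponential pathwise by
\[
\norm{\SG{t,s}}^p \leq \exp\!\Big(p\,\norm{A-\tfrac12\sumi B_i^2}(t-s)\Big)\prod_{i=1}^m\exp\!\big(p\norm{B_i}\,|W_t^i-W_s^i|\big),
\]
then takes expectations, uses independence of the increments, and evaluates each factor via the half-normal moment generating function (producing an explicit $1+\mathrm{erf}(\cdot)$ term bounded by $2$). The trade-offs are: the paper's approach for (ii) is entirely elementary---no Itô calculus, no appeal to (i)---and gives $\kappa_1=2^m$, independent of the dimension $d$; your approach is tidier in that it recycles (i), but pays a dimensional constant $\kappa_1=d^{p/2}$. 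Either is perfectly adequate for how the lemma is used later.
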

\begin{proof}
For the proof of $i)$, see  \cite{tamedGBM}. 

To show $ii)$ we recall that for a standard Gaussian variable $z \sim N(0,1)$ and $\alpha>0$
\begin{equation} \label{eq:abs_distribition}
\eval{\exp(\alpha \vert z \vert )}=\dfrac{1}{\sqrt{2 \pi}}  \int_{-\infty} ^{\infty} \exp(\alpha \vert x \vert ) \exp(-x^2/2)dx
=\exp(\alpha^2/2) \left( 1+\text{erf} (\alpha/\sqrt{2})\right ).
\end{equation}
From \eqref{eq:semigroup} we have that 
\begin{equation} \label{eq:exp_ineq}
\norm{\SG {t,s}} ^p \leq  \exp\left( \norm{ p( A-\frac{1}{2} \sum_{i=1} ^m B_i ^2)(t-s) } \right) \exp \left( 
  p  \sum_{i=1}^m \norm{ B_i} \vert W_t^i  -W_{s}^i  \vert  \right). 
\end{equation}
By taking  expectation of both sides in \eqref{eq:exp_ineq} and considering the  independence and distribution of  random  values $\vert W_t^i  -W_{s}^i  \vert$ and using 
\eqref{eq:abs_distribition} \, for $i=1,2,\hdots, m$, we have
\begin{eqnarray*}
\lefteqn{\eval{\norm{\SG {t,s}}} ^p } \\
& \leq & \exp\left( \norm{ p( A-\frac{1}{2} \sum_{i=1} ^m B_i ^2)(t-s) } \right)  \prod_{i=1}^m  \eval{ \exp \left( 
  p  \norm{ B_i} \vert W_t^i  -W_{s}^i  \vert  \right) } \\
&  = &\exp\left( \norm{ p( A-\frac{1}{2} \sum_{i=1} ^m B_i ^2)(t-s) } \right) \\
& & \times \prod_{i=1}^m  \exp\left(\dfrac{1}{2}p^2 \norm{B_i}^2 (t-s)\right) \left(1+\text{erf}(\frac{p \sqrt{t-s} \norm{B_i}}{\sqrt {2} }) \right).
\end{eqnarray*}
By the boundedness of the function $\text{erf}$, the positive  constants $\kappa_1$  and $\kappa_2$ are determined in terms of $p, \norm{A}$ and $\norm{B_i}$.
\end{proof}
For  the  proof of Theorem \ref{teo:boundedMoment}, we adapt  the approach given in \cite{hutzenthaler2012}.
In \cite{tamedGBM} a similar approach was taken with linear diffusion and so the additional element below is dealing with the nonlinear diffusion terms $g_i$.
We start 
by introducing appropriate sub events of $\Omega$. 
We let $\Omega_0^N:=\Omega$, then for $n\in \mathbb{N}_N ^+$
\begin{equation*}\label{eq:Omega}
\Omega_n^N:=\left\{\omega \in \Omega \vert \sup _{ k \in \NZ{n-1} } D_k^N(\omega) \leq N^{1/4r}
,\sup _{ k \in \NZ{n-1}} \norm{\Delta \vec{W}_{k}}\leq 1 \right\},
\end{equation*}
where the parameter $r$ is as defined in Assumption \ref{ass:1}. The dominating stochastic process $D_n^N$ is defined with $D_0^N:=\left( \lambda + \norm{x} \right) e^{\lambda}$ and  for $n \in  \NP{N}$ by
\begin{equation*}\label{eq:Dn}
D_n^N:=(\lambda + \norm{x } )  \sup _{u \in \NZ{n} }  \prod_{k=u }^{n-1 }\norm{\SG{t_{k+1},t_{k}} }  \exp \left( \lambda + \sup _{u \in \NZ{n} } \sum_{k=u} ^{n-1} \left( \lambda  \norm{\Delta \vec{W}_k}^2  +\beta_k ^N \right) \right) 
\end{equation*}
where
\begin{equation*} \label{eq:alpha}
  \beta_k ^N:= \mathbbm{1}_{\norm{Y_k^N} \geq 1}  \frac{{\innerproduct{Y_k^N} {\sg(Y_k^N)\DW {k}}} }{\norm{Y_k^N}^2} 
  \quad \text{and} \quad
\lambda:=\max\{\lambda_0,\lambda_1,\lambda_2,\lambda_3,\lambda_4\}.
\end{equation*} 
Here $\lambda_0,\ldots,\lambda_4$ are constants defined by
\begin{align*}
\lambda&_0  :=\exp\left({\sum_{i=1}^m \norm{B_i}+T \norm{ A-\frac{1}{2} \sum\limits_{i=1} ^m B_i ^2}}\right) \times \bigg( 1+2TK+T \norm{F(0)} \\ 
& \qquad \ldots  +\sumi\norm{B_i} TK+T\norm{\sB g_i(0)}+mK+\sumi \norm{g_i(0) }  \bigg) \\
\lambda_1:&= m\sumi \left(K  +\norm{g_i(0)}  \right)^2 \\
\lambda_2:&=\left(2K+\norm{F(0)}   \right)^4 \\
\lambda_3:&=\left(  \sumi\norm{B_i}K+\norm{\sB g_i(0)}  \right) ^4\\
\lambda_4:&=\left( 4T^2+2T \right)^2,
\end{align*}
where the constant $K>0$ denotes a constant that arises from the constants in Assumption \ref{ass:1}, \ref{ass:global_g} and  \eqref{eq:one_sided_ineq}.
The first result shows we can dominate the numerical solution on the set $\Omega_n^N$.
\begin{lemma} \label{lem:boundedness1}
Let Assumption \ref{ass:1} hold with $H=1$. Let $Y_n^N$ be given by \eqref{eq:tamed_EI0} and let \eqref{eq:tamingrequirement} hold.
For all $n \in \NZ{N}$ we have the pathwise inequality
$$1_{\Omega_n^N} \norm{Y_n^N} \leq D_n^N.$$%
\end{lemma}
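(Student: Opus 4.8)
The plan is to argue by induction on $n$, following the pathwise domination strategy of \cite{hutzenthaler2012} but carrying the stochastic operator $\SG{t_{k+1},t_k}$ explicitly. First I would record the elementary recursion satisfied by the dominating process. Writing $D_n^N=(\lambda+\norm{X_0})P_n\,e^{\lambda+E_n}$ with $P_n:=\sup_{u\in\NZ{n}}\prod_{k=u}^{n-1}\norm{\SG{t_{k+1},t_k}}$ and $E_n:=\sup_{u\in\NZ{n}}\sum_{k=u}^{n-1}(\lambda\norm{\vDW{k}}^2+\beta_k^N)$, the choices $u=n$ give the empty product and sum, so $P_n\ge 1$ and $E_n\ge 0$. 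Separating the index $u=n+1$ from $u\le n$ in the suprema for $D_{n+1}^N$ then yields the two bounds $D_{n+1}^N\ge(\lambda+\norm{X_0})\norm{\SG{t_{n+1},t_n}}e^{\lambda}$ and
$$D_{n+1}^N\ \ge\ \norm{\SG{t_{n+1},t_n}}\,e^{\lambda\norm{\vDW{n}}^2+\beta_n^N}\,D_n^N.$$
These are the only structural facts about $D_n^N$ I will need.

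I would then run the induction. The base case is immediate: $\Omega_0^N=\Omega$ and $\mathbbm{1}_{\Omega_0^N}\norm{Y_0^N}=\norm{X_0}\le(\lambda+\norm{X_0})e^{\lambda}=D_0^N$. For the step I use $\Omega_{n+1}^N\subseteq\Omega_n^N$, so the inductive hypothesis gives $\mathbbm{1}_{\Omega_{n+1}^N}\norm{Y_n^N}\le D_n^N$, and the defining constraints of $\Omega_{n+1}^N$ additionally supply the two pathwise facts $\norm{\vDW{n}}\le 1$ and $\mathbbm{1}_{\Omega_{n+1}^N}\norm{Y_n^N}\le D_n^N\le N^{1/4r}$, i.e. $\Dt\,\norm{Y_n^N}^{4r}\le T$. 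Writing $Z_n:=Y_n^N+\FG(Y_n^N)\Dt+\sum_{i=1}^m g_i(Y_n^N)\DW{n}$ so that $\norm{Y_{n+1}^N}\le\norm{\SG{t_{n+1},t_n}}\norm{Z_n}$, the whole step reduces, via the recursion above, to a one-step estimate for $\norm{Z_n}$, which I would prove by splitting on the size of $\norm{Y_n^N}$. If $\norm{Y_n^N}<1$ (so $\beta_n^N=0$), I bound $\norm{Z_n}$ by a deterministic constant $\bar C$ using the near-origin growth of Assumption~\ref{ass:1}(i),(ii), the Lipschitz bound (v), the taming bound $\norm{\Ft(Y_n^N)}\Dt\le1$, and $\norm{\vDW{n}}\le1$; the choice $\lambda\ge\lambda_0$ forces $\bar C\le(\lambda+\norm{X_0})e^{\lambda}$, and since $D_{n+1}^N\ge(\lambda+\norm{X_0})\norm{\SG{t_{n+1},t_n}}e^{\lambda}$ this closes the case. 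If $\norm{Y_n^N}\ge1$, I would show $\norm{Z_n}\le e^{\lambda\norm{\vDW{n}}^2+\beta_n^N}\norm{Y_n^N}$.

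For the second case I expand $\norm{Z_n}^2$, use $2\innerproduct{Y_n^N}{\sum_{i}g_i(Y_n^N)\DW{n}}=2\beta_n^N\norm{Y_n^N}^2$ on $\{\norm{Y_n^N}\ge1\}$, and must then dominate the remaining deterministic and mixed terms by $\norm{Y_n^N}^2\big(e^{2\lambda\norm{\vDW{n}}^2+2\beta_n^N}-1-2\beta_n^N\big)$. The purely diffusive term is of size $\norm{Y_n^N}^2\norm{\vDW{n}}^2$ and is exactly what $\lambda_1$ is calibrated to absorb into $e^{2\lambda\norm{\vDW{n}}^2}$; the delicate terms are the $O(\Dt)$ deterministic ones arising from $\innerproduct{Y_n^N}{\Ft(Y_n^N)}$, from the correction $-\Dt\innerproduct{Y_n^N}{\sB g_i(Y_n^N)}$, and from $\norm{\FG(Y_n^N)}^2\Dt^2$. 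I control these with the dissipativity estimate of Assumption~\ref{ass:1}(iv)—which for large $\norm{Y_n^N}$ drives $\innerproduct{Y_n^N}{\Ft(Y_n^N)}$ below zero and for bounded $\norm{Y_n^N}$ keeps it bounded—together with $\Dt\,\norm{Y_n^N}^{4r}\le T$, which turns every superlinear-in-$\norm{Y_n^N}$ quantity carrying a factor $\Dt$ into a bounded one; the constants $\lambda_2,\lambda_3,\lambda_4$ are the thresholds this requires.

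I expect the main obstacle to be precisely this last absorption in the case $\norm{Y_n^N}\ge1$: showing that on the good event the net deterministic contribution to $\norm{Z_n}^2/\norm{Y_n^N}^2-1-2\beta_n^N$ is dominated by the stochastic reserve $e^{2\lambda\norm{\vDW{n}}^2+2\beta_n^N}-1-2\beta_n^N$. Two features make this harder than the classical tamed-Euler case of \cite{hutzenthaler2012}: the extra drift correction $-\Dt\sB g_i$ and the nonlinear diffusion $g_i$ produce mixed terms $\Dt\innerproduct{\FG(Y_n^N)}{\sum_{i}g_i(Y_n^N)\DW{n}}$ that must be offset against the dissipativity reserve; and the random operator $\SG{t_{n+1},t_n}$ appears, which I deliberately keep factored out and untouched in this pathwise lemma, its integrability being dealt with separately through Lemma~\ref{lem:SG_bnd} when this domination is converted into the moment bound of Theorem~\ref{teo:boundedMoment}. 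Getting the sign and size bookkeeping to fit under a single fixed $e^{\lambda}$, rather than an $n$-dependent factor, is the crux and is what pins down the definition of $\lambda$.
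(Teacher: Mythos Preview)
Your two recursive lower bounds for $D_{n+1}^N$ are correct, but the direct one-step induction built on them cannot close in the case $\norm{Y_n^N}\ge 1$. The target inequality $\norm{Z_n}^2\le e^{2\lambda\norm{\vDW{n}}^2+2\beta_n^N}\norm{Y_n^N}^2$ fails pathwise: when $\norm{\vDW{n}}$ is small the ``stochastic reserve'' you identify, $e^{2\lambda\norm{\vDW{n}}^2+2\beta_n^N}-1-2\beta_n^N$, is $O(\norm{\vDW{n}}^2)$, while the deterministic contribution $2\Dt\innerproduct{Y_n^N}{\FG(Y_n^N)}+2\Dt^2\norm{\FG(Y_n^N)}^2$ is of order $\Dt$ and need not be nonpositive. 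The dissipativity in Assumption~\ref{ass:1}(iv) does yield a term $-a\alpha\norm{Y_n^N}^{2r+2}$, but for $\norm{Y_n^N}$ only slightly above $1$ this is overwhelmed by the correction $-\Dt\innerproduct{Y_n^N}{\sB g_i(Y_n^N)}$ and the lower-order positive pieces. A concrete obstruction under Assumption~\ref{ass:1} is $d=m=1$, $A=0$, $B_1=1$, $F(y)=y-y^3$, $g_1(y)=-y$: at $Y_n^N=1$ and $\vDW{n}=0$ one has $\FG(1)=\alpha\cdot 0-1\cdot(-1)=1$, hence $\norm{Z_n}=1+\Dt>1=e^{0}\norm{Y_n^N}$.

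The paper handles this by accepting a $\Dt$ correction in the one-step estimate on $S^{(2)}_{n+1}$, namely
\[
\norm{Y_{n+1}^N}^2\le\norm{\SG{t_{n+1},t_n}}^2\norm{Y_n^N}^2\exp\!\Big(\tfrac{2\lambda}{N}+2\lambda\norm{\vDW{n}}^2+2\beta_n^N\Big),
\]
and then \emph{not} running a step-by-step induction. Instead it introduces the random time $\tau_{l+1}^N=\max\big(\{-1\}\cup\{n\le l-1:\norm{Y_n^N}\le 1\}\big)$, iterates the $S^{(2)}$ bound from $\tau_{l+1}^N+1$ to $l$, and absorbs the accumulated $\sum_k\lambda/N\le\lambda$ into the fixed factor $e^{\lambda}$ in $D_{l+1}^N$; the starting value $\norm{Y_{\tau+1}^N}$ is controlled by the $S^{(1)}$ bound $\norm{Y_{\tau+1}^N}\le\lambda$ (or by $\norm{X_0}$ if $\tau=-1$). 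Your recursion $D_{n+1}^N\ge\norm{\SG{t_{n+1},t_n}}e^{\lambda\norm{\vDW{n}}^2+\beta_n^N}D_n^N$ carries no extra $e^{\lambda/N}$, so the global reserve $e^{\lambda}$ cannot be drawn on incrementally; with the given definition of $D_n^N$ the random-restart argument via $\tau$ is the missing ingredient.
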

\begin{proof}
On $\Omega_{n+1}^N$ by construction $\norm{\vDW{n}}\leq 1$ for $n\in\NZ{N-1}$. Therefore, $\norm{ \SG{t_{n+1},t_n} }< \infty$ for  $n \in \NZ{N-1}$. 
We prove the Lemma on two  subsets of $\Omega_{n+1}^N$
\begin{align*}
S^{(1)}_{n+1} & :=\Omega_{n+1}^N\cap \lbrace \omega \in \Omega \vert \norm{Y_n^N (\omega)} \leq 1 \rbrace\\    
S^{(2)}_{n+1} & :=\Omega_{n+1}^N\cap \lbrace \omega \in \Omega \vert 1 \leq \norm {Y_n^N (\omega)} \leq  N^{1/(4r)} \rbrace.
\end{align*}
First, on  $S^{(1)}_{n+1}$, we have from \eqref{eq:tamed_EI0} and the triangle inequality that
\begin{equation*}
\norm{Y_{n+1}^N} \leq \norm{ \SG{t_{n+1},t_n} } \left( \norm{Y_n^N}+\Delta t \norm{\FG(Y_n^N)}+ \sumi\norm{g_i(Y_n^N)} \norm{\vDW{n}}    \right).
\end{equation*}
Since $\norm{Y_n^N} \leq 1$, $\norm{\vDW{n}}\leq 1$   on $S^{(1)}_{n+1}$, and by the taming inequality $\alpha(\Dt,y)\leq 1 $ from \eqref{eq:tamingrequirement}, we have that 
\begin{equation*}
\norm{Y_{n+1}^N} \leq \norm{ \SG{t_{n+1},t_n} } \left( 1+\Dt \norm{F(Y_n^N)}+\Dt \norm{\sB g_i(Y_n^N)}+\sumi\norm{g_i(Y_n^N)} \right).
\end{equation*}
Adding and subtracting $F(0)$,  $\sB g_i(0)$, as well as $g_i(0)$ for $i=1,\hdots, m$, and then applying the triangle inequality we get 
\begin{align*}
 \norm{Y_{n+1}^N} & \leq  \norm{ \SG{t_{n+1},t_n} } \Big( 1+\Dt \norm{F(Y_n^N)-F(0)}+\Dt \norm{F(0)} \nonumber \\ 
&  +\Dt \norm{\sB g_i(Y_n^N)-\sB g_i(0)}   
 +\Dt\norm{\sB g_i(0)} \\
 & +\sumi \left(\norm{g_i(Y_n^N)-g_i(0)}+\norm{g_i(0)}\right) \Big).
\end{align*}
By the polynomial growth condition on $\vec{D} F$ given in Assumption \ref{ass:1}, the global Lipschitz condition on $g_i$ (Assumption \ref{ass:global_g}), and using that $\Dt\leq T$
\begin{align*}
 \norm{Y_{n+1}^N} \leq  & \norm{\SG{t_{n+1},t_n}} \Big( 1+T  K   \left( 1+\norm{Y_n^N} ^{2r} \right) \norm{Y_n^N} +T \norm{F(0)} \\ 
 & + T K \sum_{i=1}^m \norm{B_i} \norm{Y_n^N} + T \norm{\sB g_i(0)} +mK \norm{Y_n^N}+ \sumi\norm{ g_i(0)} \Big). 
\end{align*}
On  $S^{(1)}_{n+1}$, since $\norm{Y_n^N} \leq 1$  and $ \norm{ \vDW{n}} \leq 1 $, we have
 \begin{equation}\label{eq:Y_bound_S1}
   \norm{Y_{n+1}^N}   \leq  \lambda.
 \end{equation}
To bound 
$S^{(2)}_{n+1}$, we start from \eqref{eq:tamed_EI0} by squaring the norm
\begin{equation*}
\begin{aligned}
&   \norm{Y_{n+1}^N}^2 \leq  \norm{ \SG{t_{n+1},t_n} }^2 \norm{Y_n^N+\Delta t \FG(Y_n^N)+\sg(Y_n^N)\DW{n}}^2  \\
   & \leq \norm{ \SG{t_{n+1},t_n} }^2 \left( \norm{Y_n^N}^2+\Delta t^2 
\norm{ \FG (Y_n^N)}^2 +  m\sumi\norm{g_i(Y_n^N)}^2 \norm{\vDW{n}}^2 \right.  \\
&\left. \! +\! 2 \Dt  \innerproduct{Y_n^N}{ \FG(Y_n^N)} +2  \innerproduct{Y_n^N}{ \sg(Y_n^N)\DW{n} }\! + \!2 \Dt  \innerproduct{\FG(Y_n^N)}{ \sg(Y_n^N)\DW{n} }   \right).
 \end{aligned}\label{eq:tmplem1}
 \end{equation*}
Applying the Cauchy-Schwarz and Arithmetic-Geometric inequalities
\begin{align}  \label{eq:tmplem2}
   \norm{Y_{n+1}^N}^2 &\leq  \norm{ \SG{t_{n+1},t_n} }^2 \left( \norm{Y_n^N}^2+2\Delta t^2 
\norm{ \FG (Y_n^N)}^2 +2 m\sumi \norm{g_i(Y_n^N)}^2 \norm{\vDW{n}}^2 \right. \nonumber \\
&\left.  + 2 \Dt  \innerproduct{Y_n^N}{ \FG(Y_n^N)} +2  \innerproduct{Y_n^N}{ \sg(Y_n^N)\DW {n}}    \right)  .
\end{align}
Let us consider $\FG (Y_n^N)$ (see \eqref{eq:ftilde}).
Following the argument in \cite[(40) in Proof of   Lemma 3.1]{hutzenthaler2012} we have 
$$
\norm{F(Y_n^N)}^2\leq N \sqrt{\lambda} \norm{Y_n^N}^2.
$$
Further, by Assumption \ref{ass:global_g} , the global Lipschitz property of the function $g_i$ 
\begin{equation}\label{eq:Bgi}
\begin{aligned}
   \norm{\sB g_i(Y_n^N)}^2  &  \leq \left(  \sum_{i=1}^m\norm{B_i}K+\norm{\sB g_i(0)}  \right)^2 \norm{Y_n^N}^2   \\
   & \leq  \sqrt{\lambda} \norm{Y_n^N}^2  
\end{aligned}
\end{equation}
and 
$$
\norm{g_i(Y_n^N)}^2\leq  \left( K + \norm{g_i(0)} \right)^2  \norm{Y_n^N}^2 .
$$
By definition of $\lambda$, we have
 $$
 m\sum_{i=1}^m \norm{g_i(Y_n^N)}^2\leq \lambda  \norm{Y_n^N}^2.
 $$
Therefore  the  linear  growth   of  the second term on the RHS of \eqref{eq:tmplem2}
$$
\norm{\FG (Y_n^N)}^2 \leq 4 N  \sqrt{\lambda} \norm{Y_n^N}^2
$$
on  $S^{(2)}_{n+1}$  is obtained.
%
The one-sided Lipschitz condition on $F$ \eqref{eq:one_sided_ineq} 
and the Cauchy–Schwarz inequality give that
\begin{align}\label{eq:F_inner_product_ineq}
\innerproduct{Y_n^N}{F(Y_n^N)} & \leq \innerproduct{Y_n^N}{F(Y_n^N)-F(0)} +\innerproduct{Y_n^N}{F(0)} \nonumber \\
& \leq  \left( K + \norm{F (0)}\right) \norm{Y_n^N} ^2 \nonumber \\ 
& \leq \sqrt{ \lambda } \norm{Y_n^N}^2 
\end{align} 
where we have used that $1\leq \|Y_n^N\|$. 
Therefore by the inequalities \eqref{eq:Bgi}, \eqref{eq:F_inner_product_ineq}, Cauchy-Schwarz inequality for the term $\innerproduct{Y_n^N}{\sum _{i=1}^m B_i g_i (Y_n^N)}$ and $\lambda >1$
\begin{align*}
\innerproduct{Y_n^N}{\FG (Y_n^N)} &= \alpha(\Dt, Y_n^N) \innerproduct{Y_n^N}{F (Y_n^N)}- \innerproduct{Y_n^N}{\sum _{i=1}^m B_i g_i (Y_n^N)}  \\
& \leq \left( \alpha(\Dt, Y_n^N) +1\right) \sqrt{\lambda} \norm{Y_n^N}^2.
\end{align*}
As a result, since  $\Delta t =T/N$, $\alpha(\Dt,Y_n^N) < 1 $, by \eqref{eq:tamingrequirement}, and $(4T^2+2T)\leq \sqrt{\lambda}$, we see that on $S^{(2)}_{n+1}$ \eqref{eq:tmplem2} becomes 
\begin{align} \label{eq:Y_bound_S2}
 \norm{Y_{n+1}^N}^2 & \leq \norm{\SG{t_{n+1},t_n}}^2  \norm{Y_n ^N}^2 \left(1+\frac{8T^2}{N} \sqrt{\lambda} + 2\lambda \norm{\vDW{n}}^2 + \frac{4T}{N}\sqrt{\lambda} +2 \beta_n ^N \right) \nonumber \\
 & \leq  \norm{\SG{t_{n+1},t_n}}^2  \norm{Y_n ^N}^2 \left(1+ \frac{2\lambda}{N} + 2\lambda \norm{\vDW{n}}^2  +2\beta_n ^N \right) \nonumber \\
 & \leq \norm{\SG{t_{n+1},t_n}}^2  \norm{Y_n ^N}^2 \exp \left( \frac{2\lambda}{N} + 2\lambda \norm{\vDW{n}}^2  +2\beta_n ^N \right).
\end{align}
Now  we  carry out an induction argument on $n$. 
The case for  $n=0$ is obvious by initial condition on $\Omega_0^N=\Omega$. Let  $l \in \NZ{N-1}$ and assume $\norm{Y_{n} ^N (\omega)}\leq D_n ^N(\omega)$  holds for all $n \in \NZ{l}$ where  $\omega \in \Omega_n ^N$. We now  prove  that   
 $$\norm{Y_{l+1} ^N(\omega)}\leq D_{l+1} ^N(\omega)
 \quad \text{for all} \quad \omega \in \Omega_{l+1} ^N.$$ 
 For  all   $ \omega \in \Omega_{l+1} ^N$, we have  by the induction hypothesis $\norm{Y_{n} ^N (\omega)}\leq D_n ^N(\omega)\leq N^{1/(4r)}$,  $n\in\mathbb{N}_l$ 
 and $\Omega_{l+1} ^N \subseteq  \Omega_{n+1} ^N$.  
For any $ \omega \in \Omega_{l+1} ^N$,  $\omega$ belongs to $S_{l+1} ^{(1)}$ or $S_{l+1} ^{(2)}$. For the inductive argument we define  a random variable
$$
\tau_l ^N (\omega):=\max \left(  \lbrace  -1 \rbrace \cup \lbrace n \in \mathbb{N}_{l-1} \rbrace \bigg\vert  \norm{Y_n^N (\omega)} \leq 1 \rbrace  \right)
$$ 
%
as in \cite{hutzenthaler2012}.
This definition implies that $1 \leq \norm{Y_n^N (\omega)} \leq N^ {1/(4r)}$ for  all  $n \in \lbrace \tau_{l+1} ^N (\omega)+1, \tau_{l+2} ^N (\omega),\hdots, l \rbrace$. By the bound in  \eqref{eq:Y_bound_S2}
\begin{align*}
&   \norm{Y_{l+1} ^N  (\omega)}  \leq \norm{\SG{t_{l+1},t_l} (\omega)}  \norm{Y_l ^N (\omega)} \exp \left( \frac{\lambda}{N} + \lambda \norm{\Delta \vec{W}_l(\omega)}^2  +\beta_l ^N(\omega) \right)   \\
&  \leq  \norm{Y_{ \tau_{l+1} ^N (\omega)+1} ^N (\omega)} \\
& \times \prod_{n=\tau_{l+1} ^N (\omega)+1 } ^l \left( \norm{\SG{t_{n+1},t_{n}} (\omega)  }  \right) \exp \left( \sum_{n=\tau_{l+1}+1} ^l \left( \frac{\lambda}{N} + \lambda \norm{\Delta \vec{W}_n(\omega)}^2  +\beta_n ^N(\omega) \right) \right)  \\
 &   \leq   \norm{Y_{ \tau_{l+1} ^N (\omega)+1} ^N (\omega)}  \\ 
 & \times \sup _{u  \in \NZ{l+1}  } 
  \prod_{n=u }^l \norm{\SG{t_{n+1},t_{n}} (\omega)  }  \exp \left(\lambda +  \sup _{u \in \NZ{l+1} } \sum_{n=u} ^l \left(  \lambda \norm{\Delta \vec{W}_n(\omega)}^2  +\beta_n ^N(\omega) \right) \right).
\end{align*}
By considering \eqref{eq:Y_bound_S1}, the following completes the induction step and proof 
\begin{align*}
\begin{split}
&\norm{Y_{l+1} ^N  (\omega)} \leq (\lambda + \norm{x} ) \\
& \times \sup _{u \in  \NZ{l+1}  }  \prod_{n=u }^l \norm{\SG{t_{n+1},t_{n}} (\omega)  }  \exp \left( \lambda+ \sup _{u \in  \NZ{l+1}  } \sum_{n=u} ^l \left(  \lambda \norm{\Delta \vec{W}_n(\omega)}^2  +\beta_n ^N(\omega) \right) \right)\\
&=D_{l+1} ^N (\omega).
\end{split}
\end{align*}
\end{proof}

\begin{lemma} \label{lem:Dboundedness} For all $p \geq 1$,
$
\sup_{N \in \mathbb{N} } \eval{\sup_{ n \in \NZ{N} } \vert D_n^N \vert ^p} < \infty .
$
\end{lemma}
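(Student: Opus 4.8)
The plan is to push the supremum over $n$ inside and factor $D_n^N$ into a deterministic constant, a product of operator norms, and an exponential of a Gaussian-plus-martingale sum, and then estimate a high moment of each factor separately. Since $X_0=x$ is deterministic, for every $N$ one has the pointwise bound
\[
\sup_{n\in\NZ{N}}D_n^N\le(\lambda+\norm{X_0})e^{\lambda}\,P_N\,Q_N,\qquad
P_N:=\sup_{0\le u\le n\le N}\ \prod_{k=u}^{n-1}\norm{\SG{t_{k+1},t_k}},\quad
Q_N:=\exp\Big(\sup_{0\le u\le n\le N}\sum_{k=u}^{n-1}\big(\lambda\norm{\vDW{k}}^2+\beta_k^N\big)\Big),
\]
using $\sup_n(a_nb_n)\le(\sup_na_n)(\sup_nb_n)$ for the two nonnegative factors. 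As the prefactor is constant, Cauchy--Schwarz reduces the claim to proving that $\mathbb{E}[P_N^{2p}]$ and $\mathbb{E}[Q_N^{2p}]$ are bounded uniformly in $N$ for every $p\ge1$; I would treat the Gaussian/martingale factor $Q_N$ and the operator factor $P_N$ by different arguments.

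For $Q_N$ I would split the exponent into the nonnegative quadratic part $\lambda\sum_k\norm{\vDW{k}}^2$ and the martingale part $\sum_k\beta_k^N$. The quadratic part is a sum of independent $\chi^2$ variables, so by the scalar identity $\mathbb{E}[\exp(c(\DW{k})^2)]=(1-2c\Dt)^{-1/2}$ (valid once $\Dt$ is small enough that $c\Dt<\tfrac12$) and independence across $k$ and $i$, the full sum has exponential moment $(1-2c\Dt)^{-mN/2}\to e^{cmT}$ as $N\to\infty$, hence is bounded uniformly for all sufficiently fine partitions. For the martingale part I would use that, conditionally on $\mathcal F_{t_k}$, the increment $\beta_k^N$ is centred (the $\DW{k}$ are independent of $\mathcal F_{t_k}$ and mean zero) and, by the linear growth of the $g_i$ from Assumption \ref{ass:1}$(v)$, obeys $\mathbb{E}[(\beta_k^N)^2\mid\mathcal F_{t_k}]\le C\Dt$ on $\{\norm{Y_k^N}\ge1\}$ \emph{independently} of the still-uncontrolled moments of $Y_k^N$. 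Doob's maximal inequality applied to the exponential supermartingale generated by $\sum_k\beta_k^N$, together with this bounded conditional variance, then yields uniform exponential moments of $\sup_u\sum_{k=u}^{n-1}\beta_k^N$, and combining the two estimates controls $\mathbb{E}[Q_N^{2p}]$.

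The operator factor $P_N$ is where I expect the real difficulty, and it is the genuinely new ingredient absent from the additive-operator analysis of \cite{hutzenthaler2012}. The tempting route --- estimating $\mathbb{E}[(\prod_{k=u}^{n-1}\norm{\SG{t_{k+1},t_k}})^{2p}]$ by independence and Lemma \ref{lem:SG_bnd}$(ii)$ factor-by-factor --- must be avoided: each factor only yields $\kappa_1e^{\kappa_2\Dt}$ with $\kappa_1>1$, so the product over the $N$ steps blows up like $\kappa_1^N$ (equivalently, the absolute-value/erf contribution makes a single step $1+O(\sqrt{\Dt})$ rather than $1+O(\Dt)$). The fix is to keep, rather than discard, the correlation between consecutive steps by exploiting the zero-commutator conditions \eqref{eq:commute}: because $A$ and all the $B_i$ commute, the operators $\SG{t_{k+1},t_k}$ mutually commute and $\prod_{k=u}^{n-1}\SG{t_{k+1},t_k}=\SG{t_n,t_u}$, so that the accumulated Brownian input telescopes into the single increment $\sum_{i}B_i(W^i_{t_n}-W^i_{t_u})$ rather than a sum of $N$ independent moduli. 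Controlling $P_N$ then reduces to bounding $\sup_{0\le u\le n\le N}\norm{\SG{t_n,t_u}}$-type quantities, whose exponent is dominated by $\norm{A-\tfrac12\sum_iB_i^2}\,T+2\sum_i\norm{B_i}\,\sup_{0\le t\le T}\lvert W_t^i\rvert$, and $\sup_{t\le T}\lvert W_t^i\rvert$ is sub-Gaussian by the reflection principle, so all its exponential moments are finite and $N$-independent. I expect the delicate point of the whole proof to be exactly this passage from the product of operator norms appearing in $D_n^N$ to the telescoped norm of the product \eqref{eq:semigroup}, i.e. preserving the martingale structure of the stochastic operator instead of destroying it through naive submultiplicativity.
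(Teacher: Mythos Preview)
Your factorisation via H\"older and your treatment of $Q_N$ coincide with the paper's proof: after splitting, the paper bounds the $\lambda\sum_k\norm{\vDW{k}}^2$ and the $\sup_u\sum_k\beta_k^N$ contributions by appealing directly to \cite[Lemma~3.5]{hutzenthaler2012}, which is exactly the $\chi^2$-moment plus exponential-martingale argument you describe.

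The divergence, and the gap, is in the operator factor $P_N$. The paper does \emph{not} telescope; it simply invokes independence of the Brownian increments together with the per-step estimate of Lemma~\ref{lem:SG_bnd}, i.e.\ it uses $\mathbb{E}\big[(\prod_k\norm{\SG{t_{k+1},t_k}})^{q}\big]=\prod_k\mathbb{E}\big[\norm{\SG{t_{k+1},t_k}}^{q}\big]$ and then cites ``the $\Dt$ dependence of the upper bound'' in that lemma. Your concern that this route produces a $(1+O(\sqrt{\Dt}))^N\sim e^{c\sqrt N}$ factor through the $\mathrm{erf}$ contribution is a legitimate reading of Lemma~\ref{lem:SG_bnd}(ii); however, your proposed remedy does not close the gap either. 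The commutator conditions \eqref{eq:commute} give the \emph{operator} identity $\prod_{k=u}^{n-1}\SG{t_{k+1},t_k}=\SG{t_n,t_u}$, whereas $D_n^N$ is built from the \emph{product of norms} $\prod_{k=u}^{n-1}\norm{\SG{t_{k+1},t_k}}$, and submultiplicativity only yields $\norm{\SG{t_n,t_u}}\le\prod_k\norm{\SG{t_{k+1},t_k}}$ --- the wrong direction for the upper bound you need. So the step you yourself call ``the delicate point'' in fact fails as stated: telescoping bounds the target quantity from below, not above. (For $d=1$ the $\SG{}$ are positive scalars and the two quantities coincide, which is why the difficulty is easy to overlook.) To repair the argument along your lines you would either have to return to Lemma~\ref{lem:boundedness1} and redefine $D_n^N$ with $\norm{\SG{t_n,t_u}}$ in place of the product of step norms --- and verify that the induction there still closes --- or supply a separate uniform-in-$N$ moment bound for $\prod_k\norm{\SG{t_{k+1},t_k}}$ that does not rely on telescoping.
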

\begin{proof}
By two applications of H\"{o}lder's inequalities, we have
\begin{align*}
\begin{split}
&\mathop{\sup_{ N \in \mathbb{N}}}_{N \ge 8 \lambda p T }   \LnormR{\sup_{n \in \NZ{N}} D_n ^N}{p} \\
& \leq e^{\lambda}  \left( \mathop{\sup_{ N \in \mathbb{N}}}_{N \ge 8 \lambda p T }   \LnormR{ \exp  \sum_{k=0} ^{N-1} \left(   \lambda \norm{\Delta \vec{W}_k }^2  \right) }{2p}\right) \\
&\times \sup_{ N \in \mathbb{N}}  \LnormR{ (\lambda + \norm{x } )  \sup_{n \in \NZ{N} } \sup_{u \in \NZ{n} }  \prod_{k=u }^{n-1 }\norm{\SG{t_{k+1}},t_{k}   }    \sup_{n \in \NZ{N} } \exp  \left(\sup_{u \in \NZ{n} }\sum_{k=u} ^{n-1} \ \beta_k ^N  \right)  }{2p} \\
&  \leq e^{\lambda}  \left( \mathop{\sup_{ N \in \mathbb{N}}}_{N \ge 8 \lambda p T }   \LnormR{ \exp  \sum_{k=0} ^{N-1} \left(   \lambda \norm{\Delta \vec{W}_k }^2  \right) }{2p}\right) \\
&\times \left( \sup_{ N \in \mathbb{N}}  \LnormR{ \sup_{n \in \NZ{N} } \exp  \left(\sup_{u \in \NZ{n} }\sum_{k=u} ^{n-1} \ \beta_k ^N  \right) }{4p}\right)\\
&\times \left( \sup_{ N \in \mathbb{N}}  \LnormR{ (\lambda + \norm{x } )  \sup_{n \in \NZ{N} } \sup_{u \in \NZ{n} }  \prod_{k=u }^{n-1 }\norm{\SG{t_{k+1}},t_{k}   }      }{4p} \right).
\end{split}
\end{align*}
Then by \eqref{eq:exp_ineq} and since $\lambda$ and $x$ are deterministic we get 
\begin{align*}
& \mathop{\sup_{ N \in \mathbb{N}}}_{N \ge 8 \lambda p T }    \LnormR{\sup_{n \in \NZ{N}} D_n ^N}{p} \\
&  \leq e^{\lambda} \left( \lambda + \norm{x } \right)  \left( \mathop{\sup_{ N \in \mathbb{N}}}_{N \ge 8 \lambda p T }   \LnormR{ \exp  \sum_{k=0} ^{N-1} \left(   \lambda \norm{\Delta \vec{W}_k }^2  \right) }{2p}\right) \\
&\times \left( \sup_{ N \in \mathbb{N}}  \LnormR{ \sup_{n \in \NZ{N} } \exp  \left(\sup_{u \in \NZ{n} }\sum_{k=u} ^{n-1} \ \beta_k ^N  \right) }{4p}\right)\\
& \times \left( \sup_{ N \in \mathbb{N}} \prod_{k=0 }^{N-1 } \LnormR{ \exp\left( \norm{ ( A-\frac{1}{2} \sum_{i=1} ^m B_i ^2)\Dt } \right)  \prod_{i=1}^m   \exp \left( 
    \norm{ B_i} \vert \DW{k}  \vert  \right)   }{4p} \right).
\end{align*}

The   second and third     terms are  shown  to be bounded  in  \cite[Lemma 3.5]{hutzenthaler2012}. On the other  hand, 
the boundedness of the last term on the RHS  is due to the $\Dt$ dependence of the upper bound of  $\eval{\exp(\norm{ B_i} \vert \DW{k}  \vert)}$  as shown  in the proof of Lemma \ref{lem:SG_bnd} (ii).
\end{proof}

\begin{lemma} \label{lem:putBound}
Let $\Yt$ be given by \eqref{eq:ctsIntegral}.
For  all $p \geq 2$ and $t \in (t_n,t_{n+1}]$, there exists $K=K(p,T)>0$ such that   
\begin{equation} \label{eq:putBound}
\Lpnorm{\Yt} ^2  \leq K \left(1 +  \Lpnorm{\Yht} ^2\right) .
\end{equation}
\end{lemma}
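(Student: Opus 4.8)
The plan is to use the explicit interpolation \eqref{eq:ctsIntegral} together with the two bounds on the stochastic operator in Lemma \ref{lem:SG_bnd}. The key structural observation is that on $[t_n,t_{n+1}]$ the integrand $\Yht$ is constant and $\mathcal{F}_{t_n}$-measurable, so the integrals in \eqref{eq:ctsIntegral} evaluate explicitly and
\[
\Yt=\SG{t,t_n}\Big(\Yht+(t-t_n)\FG(\Yht)\Big)+\SG{t,t_n}\sum_{i=1}^m g_i(\Yht)\,(W_t^i-W_{t_n}^i).
\]
Applying the triangle inequality for $\Lpnorm{\cdot}$ reduces the problem to bounding three pieces, and throughout I will use that $\SG{t,t_n}$ and the increments $W_s^i-W_{t_n}^i$, $s\in[t_n,t]$, are independent of $\mathcal{F}_{t_n}$, hence of every occurrence of $\Yht$.

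For the first two pieces the coefficient multiplying $\SG{t,t_n}$ is $\mathcal{F}_{t_n}$-measurable, so independence gives $\eval{\norm{\SG{t,t_n}v}^p}\le\eval{\norm{\SG{t,t_n}}^p}\,\eval{\norm{v}^p}$ with $v$ either $\Yht$ or $(t-t_n)\FG(\Yht)$; the factor $\eval{\norm{\SG{t,t_n}}^p}$ is controlled by Lemma \ref{lem:SG_bnd}(ii) (using $\eval{\norm{\SG{t,t_n}}^p}\le\eval{\norm{\SG{t,t_n}}^2}^{p/2}$ by Jensen when $p\in[1,2)$) and is uniformly bounded since $t-t_n\le\Dt\le T$. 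It then remains to bound $\eval{\norm{v}^p}$. For $v=\Yht$ this is immediate, while for the drift term I would split $\FG=\Ft-\sB g_i$ and use the taming requirement \eqref{eq:tamingrequirement}, which gives the pointwise bound $(t-t_n)\norm{\Ft(\Yht)}\le\Dt\norm{\Ft(\Yht)}\le1$, together with the global Lipschitz property of $g_i$ (Assumption \ref{ass:1}(v)) to get $(t-t_n)\norm{\sB g_i(\Yht)}\le C(1+\norm{\Yht})$. Hence these two pieces contribute at most $C(1+\Lpnorm{\Yht})$.

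The main obstacle is the diffusion piece, because the Brownian increment $W_t^i-W_{t_n}^i$ is not $\mathcal{F}_{t_n}$-measurable and is in fact correlated with $\SG{t,t_n}$, so Lemma \ref{lem:SG_bnd}(i) does not apply directly. Here I would use submultiplicativity of the norm, $\norm{\SG{t,t_n}z}\le\norm{\SG{t,t_n}}\norm{z}$, and the power-mean inequality to write the $p$-th moment as a sum of terms of the form $\eval{\norm{\SG{t,t_n}}^p\,\norm{g_i(\Yht)}^p\,\vert W_t^i-W_{t_n}^i\vert^p}$. The quantity $\norm{g_i(\Yht)}$ is $\mathcal{F}_{t_n}$-measurable and therefore independent of the pair $(\SG{t,t_n},\,W_t^i-W_{t_n}^i)$, so it factors out; the remaining correlated factor is split by the Cauchy--Schwarz inequality into $\eval{\norm{\SG{t,t_n}}^{2p}}^{1/2}$, bounded by Lemma \ref{lem:SG_bnd}(ii) (the exponent $2p\ge2$ is always admissible), and $\eval{\vert W_t^i-W_{t_n}^i\vert^{2p}}^{1/2}$, a Gaussian moment equal to a constant times $(t-t_n)^{p/2}\le T^{p/2}$. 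Using the Lipschitz bound $\eval{\norm{g_i(\Yht)}^p}\le C(1+\eval{\norm{\Yht}^p})$ then shows this piece is also at most $C(1+\Lpnorm{\Yht})$.

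Combining the three estimates yields $\Lpnorm{\Yt}\le C(1+\Lpnorm{\Yht})$ with $C=C(T)$, and squaring together with $(1+x)^2\le2(1+x^2)$ gives the claimed bound with $K=2C^2$. I expect no further difficulty beyond the diffusion piece; the only care needed elsewhere is the routine reduction to $p\ge2$ moments of $\SG{t,t_n}$ via Jensen's inequality so that Lemma \ref{lem:SG_bnd}(ii) is applicable for all $p\ge1$.
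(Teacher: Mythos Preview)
Your argument is correct. The paper's own proof is a single sentence: it simply observes that on $[t_n,t]$ the process $\Yt$ solves a \emph{linear} SDE with initial data $\Yht$, and invokes the standard $L^p$ moment bound for linear SDEs. You instead unpack the explicit formula $\Yt=\SG{t,t_n}\big(\Yht+(t-t_n)\FG(\Yht)+\sum_i g_i(\Yht)(W_t^i-W_{t_n}^i)\big)$ and bound each piece directly via the independence of $\SG{t,t_n}$ (and the Brownian increments) from $\mathcal F_{t_n}$, together with Lemma~\ref{lem:SG_bnd}(ii), the taming bound \eqref{eq:tamingrequirement}, and the Lipschitz property of $g_i$. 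Your route is more self-contained and sidesteps any appeal to a general linear-SDE moment estimate (including the minor issue that the inhomogeneous terms here are themselves stochastic through $\SG{s,t_n}$); the paper's route is shorter but treats that estimate as known. One small simplification available to you: for the first two pieces, since $v$ is $\mathcal F_{t_n}$-measurable you could quote Lemma~\ref{lem:SG_bnd}(i) directly rather than factoring via independence and part~(ii), which also removes the need for the Jensen reduction when $p\in[1,2)$.
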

\begin{proof}
    Note that $\Yt$ is the  solution  to  a linear \Ito SDE 
    $$
    \Yt=\Yht+ \int_{t_n} ^ {t} \left(A \Ys + \SG{s,t_n} \Ft (\Yht) \right) \ds+ \sum_{i=1}^{m}\int_{t_n} ^ {t} \left( B_i  \Yt +\SG{t,t_n } g_i(\Yht ) \right) \dW_t^i
    $$
    with initial condition $\Yht$ on the interval  $[t_n,t]$ by Lemma \ref{lem:itoEq}. After adding and subtracting the terms 
  $\sumi \inttn    \SG{s,t_n}  g_i(0) \dW_{s}^i$, we have by Jensen inequality for sums and integrals as well as the Burkholder-Davis-Gundy inequality (see \cite{maoBook})
     \begin{align*} 
 \eval{\norm{\Yt}}^{p}&   \leq 6 ^{p-1} \left( \eval{\norm{\Yht}^{p}}+\norm{A}^p (t-t_n)^{p-1}  \inttn  \eval{\norm{\Ys}}^{p}  \ds \right.\\
& \left. + (t-t_n)^{p-1}  \inttn \eval{ \norm{\SG{s,t_n} \alpha(\Dt,\Yht)F(\Yht)  }^{p}}\ds \right. \\ 
& \left.+(t-t_n)^{p/2-1} \sum_{i=1}^m \norm{B_i}^p  \inttn  \eval{\norm{  \Ys}^ {p}} \ds \right. \\ 
& \left.+(t-t_n)^{p/2-1} \sum_{i=1}^m  \inttn \eval{ \norm{   \SG{s,t_n}  \left(  g_i(\Yht) -g_i(0)\right) }^ {p}} \ds   \right). 
\end{align*}
Using  Lemma \ref{lem:SG_bnd}, that $ \norm{\alpha(\Dt,\Yhk)F(\Yhk) \Dt} \leq 1$ and the global Lipschitz property of $g_i$, the desired inequality is achieved from an application of the continuous Gronwall Lemma. 
\end{proof}
We now use Lemmas \ref{lem:SG_bnd}--\ref{lem:putBound} to prove the scheme has bounded moments.
\subsection*{Proof  of  Theorem \ref{teo:boundedMoment} : Bounded moments} \label{sec:moments}
\begin{proof}
Consider the \Ito equation for the continuous 
extension   of  numerical  solution  given in Lemma \ref{lem:itoEq} on $[0,t]$  
\begin{multline*}
\Yht=x+\sumk \intk \left(A\Ys+\SG{s,t_k} \alpha(\Dt,\Yhk) F(\Yhk)  \right)\ds \\ +\sumk  \sum_{i=1}^m \intk \left(  B_i\Ys+\SG{s,t_k} g_i(\Yhk)   \right)\dW_{s}^i.
\end{multline*}
By adding and subtracting the terms 
$\sum_{i=1}^m \sumk \intk    \SG{s,t_k}  g_i(0) \dW_{s}^i$ and using the triangle inequality, we have
 \begin{align*} 
 \Lnorm{\Yht}{p} \leq & \norm{x}+\norm{A}  \sumk \intk \Lnorm{\Ys}{p}  \ds \\
& + \sumk \intk \Lnorm{\SG{s,t_k} \alpha(\Dt,\Yhk)F(\Yhk)  }{p}\ds  \\ 
& +\Lnorm{ \sumk \intk \sum_{i=1}^m   B_i\Ys \dW_{s}^i}{p} \\ 
& +\Lnorm{\sumk \intk    \SG{s,t_k}  \sg(0)  \dW_{s}^i}{p} \\ 
& +\Lnorm{\sumk \intk   \SG{s,t_k} \sumi\left( g_i(\Yhk) -g_i(0) \right) \dW_{s}^i  }{p}.
\end{align*}
 By 
Lemma \ref{lem:SG_bnd}, that $ \norm{\alpha(\Dt,\Yhk)F(\Yhk) \Dt} \leq 1 $ in  \eqref{eq:tamingrequirement}, along with the Burkholder-Davis-Gundy inequality  we   have 
  \begin{align*} 
\begin{split}
& \Lnorm{\Yht}{p}  \leq \norm{x}+\norm{A}   \sumk \intk \Lnorm{\Ys}{p}  \ds+ K  N   \\
&+p\left(\sumk \intk \sum_{i=1}^m   \Lnorm{B_i\Ys}{p}^2 \ds\right)^{1/2} \\
& +p K\left(\sum_{i=1}^m \norm{g_i(0)}^2 T \right)^{1/2}+K^2 \sqrt{m} p\left(    \sumk \intk    \Lnorm{\Yhk}{p}^2  \ds\right)^{1/2}.
\end{split}
\end{align*}
By taking the square of both sides and applying  the Jensen inequalities for sums and integrals 
\begin{equation}
\begin{aligned} \label{eq:mixedIneq}
\Lnorm{\Yht}{p} ^2 &  \leq 4 \left( \norm{x} +p K\left(\sum_{i=1}^m \norm{g_i(0)}^2 T \right)^{1/2} + K  N \right)^2 \\ 
& +4 \norm{A}^2 T \sumk \intk   \Lnorm{\Ys}{p} ^2  \ds   \\
& +4 p^2 \sumk \intk  \sum_{i=1}^m   \Lnorm{B_i\Ys}{p}^2 \ds \\
& + 4 K^4 m p^2 \sumk \intk     \Lnorm{\Yhk}{p}^2  \ds.
\end{aligned}
\end{equation}
Substituting 
\eqref{eq:putBound} into  \eqref{eq:mixedIneq} and rewriting the resulting integral inequality in discrete form, we have 
\begin{align*} \label{eq:discrete_Ineq}
\begin{split}
& \Lnorm{Y_n^N}{p} ^2  \leq 4 \left( \norm{x} +p K\left(\sum_{i=1}^m \norm{g_i(0)}^2 T \right)^{1/2}+ K  N \right)^2 \\ 
& \quad + 4 \norm{A}^2 T  \sum_{k=0}^{n-1} K  (1+\Lnorm{Y_k ^N}{p} ^2)  \frac{T}{N}   \\
& \quad +4 p^2   \sum_{k=0}^{n-1}    \sum_{i=1}^m   \norm{B_i}^2 K(1+\Lnorm{Y_k ^N}{p} ^2)  \frac{T}{N}  + 4 K^4 m p^2  \sum_{k=0}^{n-1} \Lnorm{Y_k ^N}{p} ^2  \frac{T}{N}.       
\end{split}
\end{align*}
By the discrete Gronwall inequality
\begin{align*}
\begin{split}
\sup_{ n \in \NZ{N} } \Lnorm{Y_n^N}{p} \leq C_{\norm{A},\norm{B_i}, T,p,K} \left( \norm{x}+p K\left(\sum_{i=1}^m \norm{g_i(0)}^2 T \right)^{1/2} \right .\\ \left . + KN +\norm{A}T\sqrt{K}+p\sum_{i=1}^m \norm{B_i}\sqrt{K T} \right).
\end{split}
\end{align*}
Following the bootstrap argument  presented in \cite[Lemma 3.9]{hutzenthaler2012} to deal with the term $KN$ on the RHS, we get 
\begin{equation}\label{eq:complement_ineq}
\sup_{N \in  \mathbb{N} }\sup_{n \in \NZ{N}  } \Lnorm{1_{(\Omega_n^N)^c}Y_n^N }{p}< \infty 
\end{equation}
and 
\begin{equation}\label{eq:D_ineq}
\mathop{\sup_{N \in  \mathbb{N}}}_{N \ge 8\lambda p T  }\sup_{n \in \NZ{N}   } \Lnorm{1_{\Omega_n^N} Y_n^N}{p}\leq 
\mathop{\sup_{N \in  \mathbb{N}}}_{N \ge 8\lambda p T  }\sup_{n \in \NZ{N}   }  \Lnorm{D_n^N}{p} .
\end{equation}
The boundedness of the term in right hand side of \eqref{eq:D_ineq} is proved in Lemma \ref{lem:Dboundedness}. Hence the proof is complete.
\end{proof}

\section{Proof of Weak Convergence} 
\label{sec:WeakConvergence}

We now take $C$ as a generic constant that is independent of the time step size $\Dt$.
\begin{lemma}
\label{lem:eq:growth_ineq_stoch_1}
Let the Assumptions of Theorem \ref{teo:KolmogPDE} hold.
For each $T>0$, there exists $C>0$ such that for $0 \leq t \leq T$  
\begin{equation} \label{eq:growth_ineq_det_1}
\vert \eval{\vec{D}  \Psi(t,x)\xi }\vert \leq  C \norm{\eval{\xi}} , \qquad \text{for all } \xi \in L^p(\Omega, \mathbb{R}^d), \quad p\geq1.
\end{equation}
\end{lemma}
 \begin{proof}
By interchanging the derivative and expectation (see discussion in \cite[sections 5.1 and 5.2]{bossy2021weak}), the chain rule and the definition of $\Psi$ in \eqref{eq:psidef}
 \begin{align*} \label{eq:BossyInterchange}
\vert \eval{\vec{D}  \Psi(t,x)\xi }\vert&=\vert \vec{D} \Psi(t,x) \eval{\xi}  \vert \\ 
& =\vert \eval{\vec{D} \phi(X_t ^x) \vec{D}_x X_t ^x } \eval{\xi}  \vert.
\end{align*}
Then by the deterministic Cauchy-Schwarz and triangle inequality 
\begin{align*} 
\vert \eval{\vec{D}  \Psi(t,x)\xi }\vert
&\leq \norm{\eval{\vec{D} \phi(X_t ^x) \vec{D}_x X_t ^x }} \norm{\eval{\xi} } \nonumber \\
& \leq \eval{\norm{\vec{D} \phi(X_t ^x) \vec{D}_x X_t ^x }} \norm{\eval{\xi} }\nonumber \\
& \leq  \eval{\norm{\vec{D} \phi(X_t ^x) }^2} ^{1/2} \eval{\norm{ \vec{D}_x X_t ^x }^2} ^{1/2}  \norm{\eval{\xi} }
\end{align*}
where on the last step we used the stochastic Cauchy-Schwarz  inequality.
The fact that $\phi\in C^2_b(\mathbb{R}^d)$ and Theorem \ref{teo:bounded_derivatives} complete  the proof.
\end{proof}
Considering the definition of the function $\Psi$  in \eqref{eq:psidef}, we define
\begin{equation} \label{eq:udef}
 u(t,\bar{Y}_t) :=\Psi (T-t, \bar{Y}_t).
\end{equation}

We point out the inequality \eqref{eq:growth_ineq_det_1}  implies, for $u$ defined in \eqref{eq:udef}, that 
\begin{equation}\label{eq:growth_ineq_stoch_1_U}
\vert \eval{\vec{D} u(s,\Ys)\xi} \vert \leq C \norm{ \eval{\xi} }
\end{equation}
where  $\xi$  is a  $\mathbb{R}^d$-valued random variable. 
%
We are now in a position to prove weak convergence.
\subsection*{Proof of Theorem \ref{thrm:1}: weak convergence} 
\begin{proof}
Recall the definition of $\Psi(t,x)$ from \eqref{eq:psidef}.
Applying the \Ito formula on $[t_n,t_{n+1})$ for $\Psi(t,\Yt)$ with $\Yt$ given by \eqref{eq:cts}  gives
\begin{equation} \label{eq:Ito_numeric}
\d\Psi(t, \Yt)=\left( \der{}{t}+\Lh_n(t) \right) \Psi(t,\Yt)\dt+\sum_{i=1}^ m \Lhi_n (t) \Psi(t,\Yt) \d W^i_t 
\end{equation}
where 
\begin{align*}
    \begin{split}
\Lh_n (t) \Psi(t,\Yt)  & :=\vec{D} \Psi(t,\Yt) \mutp{t,t_n}+\frac{1}{2} \sum_{i=1}^ m  \sigmatp{t,t_n} ^ \intercal \vec{D}^2  \Psi(t,\Yt)  \sigmatp{t,t_n} \\
\Lhi_n (t)\Psi(t,\Yt) & :=\vec{D} \Psi(t,\Yt) \sigmatp{t,t_n}
\end{split}
\end{align*}
and $\mut$ and $\sigmat$  are defined in \eqref{eq:mutm_sigmatm_def}.
Now we define the error function $e$ by 
$$
e:=\vert \Psi(0, \bar{Y}_T)-\Psi(T ,\bar{Y}_0)\vert =\bigg \vert \eval{ \phi(\bar{Y}_T) -\phi(X_T^x) \vert X_0=x} \bigg \vert.
$$
By definition of $u$ in \eqref{eq:udef} and by a telescopic sum, we have
$$
e=\vert \eval{ u(T,\bar{Y}_T)}-u(0,\bar{Y}_0)\vert =\Big \vert  \sum_{k=0}^{N-1} \eval{ u(t_{k+1},\bar{Y}_{t_{k+1})}-u(t_{k},\bar{Y}_{t_k}) }\Big\vert.
$$
An application of \Ito's formula for $u(s,\Ys)$,  
using \eqref{eq:Ito_numeric} on each of the subintervals, and the zero expectation of \Ito integrals gives
$$
e=\Bigg\vert \sum_{k=0}^{N-1} \eval {\int _{s=t_k}^{s=t_{k+1}} \left( \der{ } {s}   u(s, \bar{Y}_s) +  \Lh_k(s) u(s,\Ys)  \right) \ds } \Bigg\vert.
$$

On the other hand, $u(s,\Ys)=\Psi(T-s, \Ys) $  satisfies the Kolmogorov PDE \eqref{eq:KolmogorovPDE}
$$\der{}{s} u(s,\Ys)=-\LL u(s,\Ys).$$
So we have
\begin{equation}
\label{eq:weakerr1}
e=\bigg\vert \sum_{k=0}^{N-1} \eval{\int_{t_k}^{t_k+1} \left( \Lh_k(s) u(s,\Ys)-\LL u(s,\Ys) \right) \ds }\bigg\vert.
\end{equation}
Consider the integrand  
\begin{align*} \label{eq:term_inside_integral }
\begin{split}
  \Lh_k(s) &  u(s,\Ys) -\LL u(s,\Ys)  = \vec{D} u(s,\Ys) \left( \mutp{s,t_k}-\mu(\Ys)\right)  \\
  & +\frac{1}{2} \sum_{i=1}^ m \sigmatp{s,t_k} ^\intercal \vec{D}^2  u(s,\Ys)  \sigmatp{s,t_k}  
  - \frac{1}{2} \sum_{i=1}^ m  \sigma_i(\Ys)^\intercal  \vec{D}^2  u(s,\Ys)  \sigma_i(\Ys)  \\
  &=  \vec{ D} u(s,\Ys) \left( \SG{s,t_k } \Ft (\Yhk) - F(\Ys) \right) \\ &  +\sum_{i=1}^ m  (B_i \Ys)^\intercal \vec{D}^2  u(s,\Ys) \left( \SG{s,t_k } g_i (\Yhk) - g_i(\Ys)\right)     \\
  &+\frac{1}{2}\sum_{i=1}^ m \left( \SG{s,t_k } g_i (\Yhk)\right)^\intercal  \vec{D} ^2  u(s,\Ys) \left( \SG{s,t_k } g_i (\Yhk)\right) \\ 
  &   -\frac{1}{2} \sum_{i=1}^ m  g_i (\Ys) ^\intercal \vec{D} ^2  u(s,\Ys)   g_i (\Ys).
 \end{split}
\end{align*}
Adding  and subtracting the terms $\vec{ D} u(s,\Ys) \Ft(\Yhk)  $, $\vec{D} u(s,\Ys)F(\Yhk)$ and defining 
\begin{align*}
\Theta_1^k(s,\vec{x})&:=  \sum_{i=1}^ m \left( B_i \Ys \right) ^ \intercal \vec{D}^2  u(s,\vec{x}) \left( \SG{s,t_k } g_i (\Yhk) \right)   \\
\Theta_2^k(s,\vec{x})&:=\sum_{i=1}^ m \left( B_i \Ys \right) ^ \intercal \vec{D}^2  u(s,\vec{x})   g_i(\vec{x}) \\
\Theta_3^k(s,\vec{x})&:=  \frac{1}{2} \sum_{i=1}^ m  \left(\SG{s,t_k}g_i(\Yhk)\right)^\intercal \vec{D}^2  u(s,\vec{x})  \left(\SG{s,t_k}g_i(\Yhk)\right) \\
\Theta_4^k(s,\vec{x})&:=\frac{1}{2} \sum_{i=1}^ m  g_i(\vec{x}) ^\intercal  \vec{D}^2  u(s,\vec{x}) g_i(\vec{x}) 
\end{align*}
such that  
  $\Theta_1^k(t_k,  \Yhk)=\Theta_2^k(t_k,  \Yhk)$, $\Theta_3^k(t_k,  \Yhk)=\Theta_4^k(t_k,  \Yhk)$, we have
$$
 \Lh_k(s) u(s,\Ys)-\LL u(s,\Ys):=T_1^k(s)+T_2^k(s)+T_3^k(s)+T_4^k(s)+T_5^k(s)+T_6^k(s)+T_7^k(s)
$$
where 
\begin{align*}
T_1^k(s) &:=\vec{D} u(s,\Ys) \left(\SG{s,t_k} \Ft(\Yhk)- \Ft(\Yhk)\right) & T_4 ^k (s) & :=  \Theta_1^k(s,  \Ys)-\Theta_1^k(t_k,  \Yhk)\\
T_2 ^k(s) &:=   \vec{D} u(s,\Ys)  \left( \Ft(\Yhk)- F(\Yhk) \right)   & T_5^k (s)& :=  \Theta_2^k(t_k,  \Yhk)-\Theta_2^k(s,  \Ys)\\
T_3 ^k(s) & :=  \vec{D} u(s,\Ys) \left( F(\Yhk)-F(\Ys) \right) 
& T_6 ^k (s)& :=  \Theta_3^k(s,  \Ys)-\Theta_3^k(t_k,  \Yhk) \\ 
& & T_7 ^k (s)& := \Theta_4^k(t_k,  \Yhk)- \Theta_4^k(s,  \Ys).
\end{align*}
In terms of these functions, the error \eqref{eq:weakerr1} is given by
$$
e=\left\vert \sum_{k=0}^{N-1} \eval{\intk \left(T_1^k(s)+T_2^k(s)+T_3^k(s)+T_4^k(s)+T_5^k(s)+T_6^k(s)+T_7^k(s) \right)\ds} \right\vert.
$$
We now seek to bound each $\left\vert \eval{T_i^k}\right\vert$ by a term of order $(s-t_k)$ or $\Dt$ on the subinterval $[t_k,t_{k+1}]$, for $i=1,\ldots,7$.

Starting with $T_1^k$, the \Ito  equation for $\SG{}$  in \eqref{eq:Homogen} for $\mathcal{F} _ {t_k}$  measurable, $\mathbb{R}^d$-valued random variables $v\in L^2  (\Omega,\mathbb{R}^d)$     gives 
\begin{equation*}
\SG{s,t_k} v=   v+ \int_{t_k} ^s A \SG{r,t_k} vdr+ \int_{t_k} ^s \sB  \SG{r,t_k} vdW^i _r.
\end{equation*}
By the zero expectation of \Ito integrals and Fubini's Theorem,
\begin{equation*} \label{eq:expectation_integral}
\eval{\SG{s,t_k} v-v} =    \int_{t_k} ^s \eval{\SG{r,t_k} v}dr.
\end{equation*}
Taking $v=\Ft(\Yhk)$,  boundedness of the operator  $\SG{}$ from Lemma \ref{lem:SG_bnd}  and  bounded 
moments of numerical solution in Theorem  \ref{teo:boundedMoment} and the inequality  \eqref{eq:growth_ineq_stoch_1_U}, we have 
\begin{equation*}
\left\vert \eval{T_1^k(s)} \right\vert \leq C \norm {\int_{t_k} ^s \eval{\SG{r,t_k} \Ft(\Yhk)}dr}  \leq    C (s-t_k).
\end{equation*}
For $T_2^k$, by the definition of $\Ft$ in \eqref{eq:tamingrequirement} we have
$$
\left\vert \eval{T_2^k(s)} \right\vert =\left\vert \eval{  \vec{D} u(s,\Ys) \left(-\Dt    \alpha(\Dt,\Yhk)\norm{F(\Yhk)}    F(\Yhk)  \right) } \right\vert  \leq C \Dt ,
$$ 
where the inequality $ \alpha(\Dt,\Yhk)  < 1$ from \eqref{eq:tamingrequirement}, bounded moments of numerical solution from Theorem \ref{teo:boundedMoment} and the inequality  \eqref{eq:growth_ineq_stoch_1_U}  are  used. 

We now consider $T_3^k$. \Ito's formula for  $F_{j}(\Ys)$ around $\Yhk$ where $F_{j}:\mathbb{R}^d \to \mathbb{R}$ refers to the $j$th component of the vector valued function $F$ for $j=1,\hdots,d$  gives 
\begin{equation*}
F_{j}(\Ys)=F_{j}(\Yhk)+\int_{t_k} ^s \Lh_k   F_{j}(\Yr)  dr 
+\sum_{i=1}^ m  \int_{t_k} ^s \Lhi_k   F_{j}(\Yr) dW^i _r
\end{equation*}
where 
\begin{align*}
    \begin{split}
\Lh_k   F_{j}(\Yr)  & :=\vec{D} F_{j}(\Yr) \mutp{r,t_k}+\frac{1}{2} \sum_{i=1}^ m  \sigmatp{r,t_k} ^ \intercal \vec{D}^2  F_{j}(\Yr)  \sigmatp{r,t_k} \\
\Lhi_k   F_{j}(\Yr) & :=\vec{D} F_{j}(\Yr)    \sigmatp{r,t_k}. 
\end{split}
\end{align*}
By  zero expectation of the \Ito integral and Jensen's inequality for the integral and expectation, we have
\begin{equation*}
\left\vert \eval{F_{j}(\Ys)-F_{j}(\Yhk)} \right\vert ^2=(s-t_k)\int_{t_k} ^s \eval{\vert \Lh_k   F_{j}(\Yr)   \vert ^2} dr .
\end{equation*}
By polynomial growth assumption on the derivatives of $F$ and  bounded numerical moments, we conclude that 
$$
\left\vert \eval{T_3^k(s)} \right\vert =C\norm{\eval{F(\Ys)-F(\Yhk)}}\leq C (s-t_k).
$$
Now consider the term $T_4^k$. 
By applying the \Ito formula  to the function $\Theta_1^k(t,\Yt)$ over the interval $[t_k,s]$ and taking expectation, we have
\begin{align*}
T_4^k  = \eval{\Theta_1^k(s,  \Ys)-\Theta_1^k(t_k,  \Yhk)}=\eval{  \int_{t_k} ^s \left((\der{}{r}+ \Lh_k)\Theta_1^k(r,\Yr) \right) dr} .  
\end{align*}
The integrand is given by
\begin{align}\label{eq:Theta_Ito}
   \bigg(\der{}{r}+& \Lh_k\bigg)\Theta_1^k(r,\Yr) = \sum_{i=1}^ m \left(  B_i \Yr \right) ^ \intercal \der{}{r} \left(  \vec{D}^2  u(r,\Yr) \right) \left( \SG{r,t_k } g_i (\Yhk) \right)  \nonumber \\ 
   &+\sum_{i=1}^ m \left(  B_i \Yr \right) ^ \intercal   \vec{D}^2  u(r,\Yr) ( A-\frac{1}{2} \sum_{k=1} ^m B_k ^2) \SG{r,t_k } g_i (\Yhk)  \nonumber \\ 
   &+\sum_{i=1}^ m \vec{D} \Big[ \left( B_i \Yr \right) ^ \intercal \vec{D}^2  u(r,\Yr) \left( \SG{r,t_k } g_i (\Yhk) \right) \Big]  \mutp{r,t_k} \nonumber \\ 
&+\frac{1}{2} \sum_{i=1}^ m  \sigmatp{r} ^ \intercal \vec{D}^2  \Big[ \left( B_i \Yr \right) ^ \intercal \vec{D}^2  u(r,\Yr) \left( \SG{r,t_k } g_i (\Yhk) \right) \Big]    \sigmatp{r,t_k}.
\end{align}
By the continuity of all the derivatives of $u$, the condition that $\phi\in C^4_b(\mathbb{R}^d)$ and bounded moments of the numerical method in Theorem \ref{teo:boundedMoment} we have, similar to the arguments in \cite[Appendix B]{wang2021weak}, that $\eval{   (\der{}{r}+ \Lh_k)\Theta_1^k(r,\Yr)  }$ is uniformly bounded on $[t_k,s]$. Therefore we conclude  
\begin{align*}
\left\vert \eval{T_4^k (s)} \right\vert  
\leq   \int_{t_k} ^s \vert\eval{(\der{}{r}+ \Lh_k)\Theta_1^k(r,\Yr)} \vert dr  \leq C (s-t_k).
\end{align*}

Similar arguments can be applied to $T_5^k,T_6^k,T_7^k$ as well. Finally, by Fubini's theorem for integrals
\begin{align*}
  e&=\left\vert \sum_{k=0}^{N-1} \eval{\intk (T_1^k(s)+T_2^k(s)+T_3^k(s)+T_4^k(s)+T_5^k(s)+T_6^k(s)+T_7^k(s))\ds} \right\vert  \\
  & \leq C  \left(\Dt+  \sum_{k=0}^{N-1} \intk (s-t_k)\ds\right) \\
  & = C(T) \Dt.
\end{align*}
we obtain the desired order for the weak error of the scheme. 
\end{proof}

\begin{acknowledgements}
The first author is supported  by  The Scientific and Technological Research Council of Turkey  (TUBITAK) 2219-International Postdoctoral Research Fellowship Program, 
grant number 1059B192202051. We are grateful to two anonymous reviewers for their comments and suggestions.
\end{acknowledgements}

\noindent
{\small \textbf{Conflict of interests:}\\
Not Applicable.}

\bibliographystyle{spmpsci}      
\bibliography{taming_references}   

\end{document}